\newtheorem{theorem}{Theorem}
\newtheorem{proposition}[theorem]{Proposition}
\newtheorem{lemma}[theorem]{Lemma}
\newtheorem{corollary}[theorem]{Corollary}
\theoremstyle{definition}
\newtheorem{definition}[theorem]{Definition}
\theoremstyle{remark}
\newtheorem{remark}[theorem]{Remark}
\def\L{{\mathcal L}}
\def\H{{\mathcal H}}
\def\P{{\mathcal P}}
\def\R{\Bbb R} 
\def\Z{\Bbb Z}
\def\C{\Bbb C}
\def\Q{\Bbb Q}
\def\be{\begin{equation}}
\def\ee{\end{equation}}
\def\bea{\begin{eqnarray}}
\def\eea{\end{eqnarray}}
\newcommand{\nn}{\nonumber\\}
\newcommand{\inv}{^{-1}}
\newcommand{\pbz}{\psi^\alpha_z}
\newcommand{\bbetap}{\bar{\beta'}}
\newcommand{\rl}{\rangle\langle}
\newcommand{\com}[1]
{{\overline{#1}}}
\def\bar{\overline}
\newcommand{\ec}[2]
{{\psi^{#1}_{#2}}}
\newcommand{\fact}{{\tilde g}}
\newcommand{\fract}{{g}}
\newcommand{\ccomp}[1]{\mathcal C(#1)}
\begin{document}
\baselineskip=18pt
\title{On quantum complex flows}
\author{Thierry Paul}
\address{CNRS, 
LYSM – Laboratoire Ypatia de Sciences Mathématiques, Roma, Italia \& LJLL – Laboratoire Jacques-Louis Lions,  
Sorbonne Université 4 place Jussieu 75005 Paris France}
\email{thierry.paul@sorbonne-universite.fr}

\date{}

\begin{abstract}
We study the propagation of quantum T\"oplitz observables through quantized complex linear canonical transformation of one degree of freedom systems. We associate to such a propagated observable a non local ``T\"oplitz" expression involving off diagonal terms. We study the link of this constrauction with the usual Weyl symbolic paradigm.

%
%
%
%
%
\end{abstract}
\maketitle 

\small
\tableofcontents
\large

\section{Introduction and main result}\label{intmr}

Complex quantum Hamiltonians have (re)gained a lot of interest these last years, see e.g. the book \cite{Ler} and all the references quoted there. It seems that, at the contrary, quantization of complex symplectic flows didn't get the interest it deserves as it did at the period of the birth of Fourier integral operators, see \cite{Sjo} for example. Inside this category, representation of \textit{linear} complex symplectic flows, namely the complex symplectic group and its corresponding complex metaplectic representation has also lost most of interest since the golden years of ``group theoretical methods in physics", see \cite{gross}.

Let us remark that, since complex numbers are  (according to us) necessary to the formulation of quantum mechanics, considering the Schr\"odinger equation on a Hilbert space $\H$  with complex (e.g. bounded) Hamiltonian $H$ does not create any intrinsic a priori difficulty, a property that classical mechanics (on a real symplectic phase space $\P$) doesn't share with its quantum counterpart. The quantum flow is still given by $e^{-itH/\hbar}$ which exists (of course it is not anymore unitary) for all time for example when $H$ is bounded.  $e^{-itH/\hbar}$ defines for any $t\in\R$ a bounded operator on $\H$, at the contrary of the Lie exponential $\L^{-th}$ associated to the symbol $h$ of $H$ which doesn't apply on $\P$ because the Hamiltonian vector field associated to $h$ is not tangent to $\P$ anymore.

This complex setting is not the only one leading to to a situation where quantum mechanics is well defined while its underlying classical counterpart is not. Little regular potentials (or more generally Hamiltonians) generating vector fields below Cauchy-Lipschitz regularity condition are also examples of perfectively well defined quantum dynamics having a classical counterpart ill-defined, at least in the standard sense of this word. Another situation deals with classical chaotic systems, and the semiclassical approximation of its quantum counterpart in the case where the Planck constant $\hbar$ vanishes and  the time evolution diverges, the two limits being taken at the same time and being correlated. This twofold limit correspond to the classical (in the sense of $\hbar\to 0$) limit of the quantum flow taken at time infinite and could be naively though being the classical dynamics at time $t=\infty$, which doesn't exist, especially for chaotic systems.

We have been studied these singular (with respect to the classical underlying dynamics) situations in a series of articles dealing with the semiclassical approximation of the quantum dynamics.  In \cite{rough} was shown on examples  potentials giving rise to BV (and not more) vector field how quantum initial conditions of the Schr\"odinger equation select between the several\footnote{due to ill-definiteness of the dynamics not satisfying Cauchy-Lipschitz condition} 
possible classical trajectories the one obtained at the classical limit (``superposition of them are also reachable). In \cite{ambfigal,figal} is shown in the general BH case how the Di Perna-Lions \cite{diperna} classical flow associated to these singular Hamiltonians is recovered at the classical limit, see \cite{p2} for a (short) review of results concerning singular potentials. Long time semiclassical ``chaotic" evolution was first apprehended  in \cite{p}, where splitting and reconstruction of evolved coherent states was shown to happen around hyperbolic fixed points of the classical dynamics, and is studied more systematically in \cite{p1} where we show that the limit $\hbar\to 0,t\to\infty$ leads to non standard classical limit : the phase space - locus where the symbol of the evolved observable is defined -  becomes a noncomuutative space (space of leaves of the invariant foliation of the classical flow). Finally in \cite{p3} we studied ``quantum" observables constructed in the framework of topological quantum fields theory involving  even more singular (below continuous) naive symbols. A change of paradigm dealing with operator valued symbols is introduced in order to define the ``right" symbol (and the right underlying phase-space).
 
 In all these papers, the phase space (and therefore the dynamics) obtained by taking the classical limit $\hbar\to 0$ has had to be changed from the  (standar symplectig manifold) expected ones: \cite{p,rough} the limit dynamics becomes probabilistic (or ubiquitous), in \cite{ambfigal,figal} the limit flow is only defined almost everywhere and in \cite{p1,p3} the phase space becomes a noncommutative space. Moreover in \cite{p1,p3}, though it is the standard quantum dynamics which is studied, new types of quantizations were needed both in a form of, say, non local T\"oplitz quantization.
 The goal of the present little paper is to show how the \textit{standard} quantization of complex linear canonical transformations can be understood in terms of \textit{real} flows (actually two) and a genuine change of type of quantization.
 \vskip 1cm The Dirac notation will be used through the whole article:  $\vert\#\rangle$ will be meant as an element of a Hilbert space $\mathcal H$ with scalar product $(\cdot,\cdot)$, $\langle\#1\vert\#2\rangle$ as the scalar product 
$(\vert\#1\rangle,\Vert\#2\rangle)$, and $\vert\#1\rangle\langle\#2\vert$ as the operator on $\mathcal H$ defined by $\vert\#1\rangle\langle\#2\vert\varphi:=(\vert\#2\rangle,\varphi)\vert\#1\rangle.$, $\varphi\in\mathcal H$.
 
 \vskip 1cm
We will study conjugation of quantum observables given by the  T\"oplitz (anti-Wick) construction by complex metaplectic operators. In order to avoid heavy notation, we will be, in most of the paper, concern with the one degree of freedom case: a two dimensional phase space that will be, for simplicity, $\C$.


 More precisely we consider operators of the form 
\[
H=\int_\C h_\alpha(z)\vert\pbz\rangle\langle\pbz\vert\frac{dzd\bar z}{2\pi\hbar}.
\]
where the family of coherent states $\pbz$ is defined, 
for $\alpha\in\C,\ \Im\alpha>0$ and $z=(q,p)\in\R^2$ the coherent state $\psi^\alpha_z$ defined by
\[
\psi^\alpha_z(x)
:=
\left(\frac{\Im\alpha}{\pi\hbar|\alpha|^2}\right)^{\frac14}e^{-\frac i{2\hbar\alpha}(x-q)^2}e^{i\frac{px}\hbar}e^{-i\frac{pq}{2\hbar}}.
\]
Note that the standard T\"oplitz quantization correspond to $\beta=1$, but $\beta$ will show to be a true dynamical variable, so we need to consider it as a degree of freedom. the link between different $h_\beta(\cdot)$ leading to the same operator $H$ (in particular the standard case) is given below, in Appendix \ref{w}, Lemma \ref{hushus}. One has
\begin{eqnarray}
\int_\C h_\alpha(z)\vert\pbz\rangle\langle\pbz\vert\frac{dzd\bar z}{2\pi\hbar}
&=&
\int_\C h_{\alpha,\alpha'}(z)\vert\psi^{\alpha'}_z\rangle\langle\psi^{\alpha'}_z\vert\frac{dzd\bar z}{2\pi\hbar}\label{alphaalphaprime}\\
&\Updownarrow&\nonumber\\
h_{\alpha,\alpha'}&=&e^{-i\frac\hbar4(\alpha-\alpha')\Delta_\xi+i\frac\hbar4(\frac1\alpha-\frac1{\alpha'})\Delta_x}h_{\alpha'}\label{betabetaprime}
\end{eqnarray}
for $ \Im{(\alpha-\alpha')}>0.\ \Im{(\frac1\alpha-\frac1{\alpha'})}<0$.

An easy computation shows that, when $h_\alpha(z):=1$, $H=\mbox{Id}$ for any value of $\alpha$.
\vskip 1cm
We consider the operator $U(S)\inv HU(S)$, conjugated of $H$ by the operator $U(S)$ where $S$ is a real $2\times 2$ matrix $\begin{pmatrix}
a&b\\c&d
\end{pmatrix} $ of determinant one and $U$ is the metaplectic representation. More precisely $U(S)$ is the operator of integral kernel given by
\bea\label{udexy}
U(x,y)&=&\frac1{\sqrt{b2\pi\hbar}} e^{-\frac i{2b\hbar}\left(dx^2-2xy+ay^2\right)}\ \ \ \ \ \ \ \ b\neq 0\nonumber\\
&&\\
&=&{\sqrt d}\delta\left(dx  -y\right)e^{i\frac c d\frac{x^2}2}\ \ \ \ \  \ \ \ \ \ \ \ \ \ \ \   b=0\nonumber
\eea
In order to show of \eqref{udexy} can be derived, let us recall that one way to define the metapletic representation is through the formula
\be\label{meta}
S\binom{x}{-i\hbar\frac d{dx}}=\binom{U(S)^{-1}xU(S)}{U(S)^{-1}{(-i\hbar\frac d{dx})}U(S)}.
\ee
Writing that $U(S)$ is unitary by $U(S)^{-1}=U(S)^*$, that is $U^{-1}(x,y)=\bar{U(y,x)}$ we get for $S=\begin{pmatrix}
a&b\\c&d\end{pmatrix}$, an equation whose solution is \eqref{udexy}, of course modulo a global phase.

  It is well known and easy to derive after \eqref{meta} that, when $S$ is real the Weyl symbol of $U(S)HU(S)^{-1}$ is the push-forward of the Weyl symbol of $H$ by $S$. namely
  \be\label{pfw}
  \sigma_{U(S)^{-1}HU(S)}^{Weyl}\binom{q}{p}=\sigma_{H}^{Weyl}\left(S^{-1}\binom{q}{p}\right).
  \ee
  
  An easy computation
  shows that this implies the following result.
  \[
U(S)^{-1}HU(S)=\int h_{\alpha_S(\alpha),\alpha}(Sz)\vert\pbz\rangle\langle\pbz\vert\frac{dzd\bar z}{2\pi\hbar}
\]
$\mbox{ with }\alpha_S(\alpha)=iS\cdot(\alpha)$, where  $S\cdot z=\frac{az+b}{cz+d}$, and $h_{\alpha_S(\alpha),\alpha}$ is given by \eqref{betabetaprime}.
 
 Of course this formula doesn't make any sense when $S$ is not real any more for general symbols $h$. But we will see that there is a general ``off-diagonal" T\"oplitz representation.

The main result of the present short note is the following theorem.

\textbf{Notation}: 
we will denote 
for $S= \begin{pmatrix}
a&b\\c&d
\end{pmatrix}$, $z=(q,p)$ and $\alpha\in\C$, 
$$
\com{S}= \begin{pmatrix}
\bar  a&\bar b\\\bar c&\bar d
\end{pmatrix},\ S (z)=\begin{pmatrix}
a&b\\c&d
\end{pmatrix}\begin{pmatrix}
q\\p
\end{pmatrix}\mbox{ and }
S\cdot\alpha=\tfrac{a\alpha+b}{c\alpha+d}.
$$
Moreover, $\wedge$ will denote the symplectic form pn $T^*\R$, $z\wedge z'=pq'-qp'$.
\begin{theorem}[Off-diagonal T\"oplitz representation]\label{thm1}\ 

Let
\[
H=\int h(z)\vert\psi^\alpha_z\rangle\langle\psi^\alpha_z\vert\frac{dzd\bar z}{2\pi\hbar}.
\]
Define, for $S
\in SL(2,\Z)$ and $\Im\alpha>0$, the real $2\times 2$ matrix ${_\alpha T_S}$ by 
$$
{_\alpha T_S(z)}=(q_S^\alpha,p_S^\alpha)\in\R^2\mbox{ defined by }q_S^\alpha+\alpha_S(\alpha)p_S^\alpha=q^S+\alpha_S(\alpha)p^S,\ 
\begin{pmatrix}
q^S\\p^S
\end{pmatrix}:=S(z),\ z=(q,p).
$$
Then, for any $S,\alpha$ such that $|V\cdot \alpha|<\infty,\ \Im(V\cdot \alpha)>0,\ V=S^{-1},\com{S}^{-1}, S^{-1}\com{S}$, 
\[
U(S)^{-1}HU(S)=\int 
{_\alpha T_{\com{S}}}\#h_{\com{S}\cdot \alpha,\alpha}(z)
\tfrac{\vert\psi^{S^{-1}\com{S}\cdot\alpha}
_{{_\alpha T_{S^{-1}\com{S}}}(z)}
\rangle\langle\psi^{\alpha}_z\vert}
{\langle\psi^{S^{-1}\com{S}\cdot\alpha}
_{{_\alpha T_{S^{-1}\com{S}}}(z)}
\vert\psi^{\alpha}_z\rangle}
\frac{dzd\bar z}{2\pi\hbar},
\]
where $h_{\com{S}\cdot \alpha,\alpha}$ is defined by \eqref{alphaalphaprime}-\eqref{betabetaprime} and $T\# h$ is the push-forward of $h$ by $T$.

\end{theorem}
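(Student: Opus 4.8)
The plan is to move the metaplectic operators onto the coherent states, then to pre-adjust the quantisation gauge of $H$ so that the transported bra returns exactly to parameter $\alpha$, and finally to renormalise the resulting dyad. First I would read off from the kernel \eqref{udexy} the Hilbert-space adjoint of $U(S)$: a direct inspection of \eqref{udexy} gives $U(S)^{*}=U(\com S\inv)$ up to a global phase, which for real $S$ is nothing but the unitarity $U(S)^{*}=U(S)\inv$ invoked after \eqref{meta}. Consequently, in $U(S)\inv H U(S)$ the ket $\vert\pbz\rangle$ is transported by $U(S\inv)$ while the bra $\langle\pbz\vert$ is transported by $U(\com S\inv)$. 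This asymmetry between $S\inv$ and $\com S\inv$ is the whole source of the off-diagonal structure and of the appearance of $\com S$: for $S\notin SL(2,\R)$ the two transports genuinely differ.

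Second, rather than conjugating $H$ as given, I would first rewrite it in the $\com S\cdot\alpha$ gauge by the change-of-quantisation relation \eqref{alphaalphaprime}--\eqref{betabetaprime} of Lemma \ref{hushus}, namely
\[
H=\int h_{\com S\cdot\alpha,\alpha}(z)\,\vert\psi^{\com S\cdot\alpha}_z\rangle\langle\psi^{\com S\cdot\alpha}_z\vert\,\frac{dzd\bar z}{2\pi\hbar}.
\]
The reason for this preliminary move is arithmetical: once the reference parameter is $\com S\cdot\alpha$, the transported bra acquires parameter $\com S\inv\cdot(\com S\cdot\alpha)=\alpha$, so it returns exactly to $\pbz$, while the transported ket acquires parameter $S\inv\cdot(\com S\cdot\alpha)=S\inv\com S\cdot\alpha$, precisely the parameter in the statement.

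Third, I would compute the metaplectic action on a single coherent state by a Gaussian integration against \eqref{udexy}: for admissible $V$ one gets $U(V)\psi^\beta_z=\mu_V(z)\,\psi^{V\cdot\beta}_{{_\beta T_V}(z)}$, where the squeezing parameter follows the Möbius action $\beta\mapsto V\cdot\beta$ and the centre follows the real linear map ${_\beta T_V}$ characterised, as in the statement, by invariance of the complex combination $q+(V\cdot\beta)p$; these maps obey the composition law ${_{V_1\cdot\beta}T_{V_2}}\circ{_\beta T_{V_1}}={_\beta T_{V_2V_1}}$. Substituting $V=S\inv$ on the ket and $V=\com S\inv$ on the bra, and then changing the integration variable so that the bra centre is $z$ (which replaces $z$ by ${_\alpha T_{\com S}}(z)$ in the symbol, producing a push-forward by ${_\alpha T_{\com S}}$), the ket centre becomes ${_{\com S\cdot\alpha}T_{S\inv}}\circ{_\alpha T_{\com S}}(z)={_\alpha T_{S\inv\com S}}(z)$ by the composition law, exactly as claimed.

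Finally I would pass from the unnormalised dyad to the normalised one of the statement by dividing by the overlap $\langle\,\cdot\,\vert\,\cdot\,\rangle$. The crux, and the step I expect to be the main obstacle, is to check that all scalar prefactors accumulated along the way—the two metaplectic normalisations $\mu_{S\inv}$, $\mu_{\com S\inv}$, the overlap placed in the denominator, and the Jacobian of the change of variables—combine to exactly $1$, so that the symbol is precisely ${_\alpha T_{\com S}}\#h_{\com S\cdot\alpha,\alpha}$ with no residual factor, and that the metaplectic cocycle phases cancel. The hypotheses $\Im(V\cdot\alpha)>0$ and $|V\cdot\alpha|<\infty$ for $V=S\inv,\com S\inv,S\inv\com S$ are exactly what guarantee that the three Gaussians involved are normalisable and that the overlap in the denominator does not vanish. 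As a consistency check, when $S$ is real one has $\com S=S$, hence $S\inv\com S=\mathrm{Id}$, the ket collapses to $\pbz$, the denominator equals $1$, and the formula reduces to the diagonal T\"oplitz expression recorded after \eqref{pfw}.
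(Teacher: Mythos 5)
Your route is essentially the paper's: both proofs reduce the theorem to the action of the complex metaplectic operator on a single coherent state (Proposition \ref{lem2} is exactly your Gaussian computation $U(V)\psi^{\beta}_{z}=\mu_{V}(z)\,\psi^{V\cdot\beta}_{{_\beta T_V}(z)}$), exploit the asymmetry $U(S)^{*}=U(\com{S}^{-1})$ to turn the rank-one projector into an off-diagonal dyad, and finish with the change of variables $z\to{_\alpha T_{\com{S}}}(z)$ together with the composition law for the maps ${_\beta T_V}$ (which is Theorem \ref{flot}). Your preliminary rewriting of $H$ in the $\com{S}\cdot\alpha$ gauge via \eqref{alphaalphaprime}--\eqref{betabetaprime} is a step the paper leaves implicit; it is in fact what makes the transported bra return to the parameter $\alpha$ and explains why $h_{\com{S}\cdot\alpha,\alpha}$ rather than $h$ appears in the final formula, so spelling it out is an improvement on the written proof.

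The one genuine gap is the step you yourself flag as ``the main obstacle'': you never verify that the metaplectic normalisations $\mu_{S^{-1}}$, $\mu_{\com{S}^{-1}}$, the cocycle phases and the overlap combine to give exactly the normalised dyad. The paper closes this in one line by an idempotency argument you should adopt: since $\vert\psi^{\alpha}_z\rangle\langle\psi^{\alpha}_z\vert$ is idempotent, so is its conjugate, hence writing
$U(S)^{-1}\vert\psi^{\alpha}_z\rangle\langle\psi^{\alpha}_z\vert U(S)=L\,\vert\phi_1\rangle\langle\phi_2\vert$
forces $L^{2}\langle\phi_2\vert\phi_1\rangle=L$, i.e. $L=\langle\phi_2\vert\phi_1\rangle^{-1}$ exactly, with no tracking of Gaussian prefactors or phases. (Your hypothesis that the remaining factor is ``exactly $1$'' also needs one qualification: the change of variables produces the Jacobian $\vert\det{_\alpha T_{\com{S}}}\vert$, which is not $1$ in general --- compare the tables of Section \ref{annb} and the explicit factor $\vert\det{_\alpha T_{\com{S}}}\vert$ in Theorem \ref{thm1n} --- so it must be absorbed into the push-forward of the measure $h\,dz\,d\bar z$ rather than of the function $h$.)
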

\begin{remark}\label{rm0}
It is easy to check that, for any $S\in SL(2,\C)$ there exists $\alpha,\ \Im\alpha>0,$ such that $|V\cdot \alpha|<\infty,\ \Im(V\cdot \alpha)>0,\ V=S^{-1},\com{S}^{-1}, S^{-1}\com{S}$, 
\end{remark}
\begin{remark}\label{rm1}
We could have thought to try to use $U(S)^{-1}HU(S)\psi ^\beta_z$ for $H$ pseudo, but this creates $O(\hbar^\infty)$ terms in competition with the large, as $\hbar\to 0$, terms of the norm of $U(S)\psi^\beta_z$.
\end{remark}
\begin{remark}\label{rm2}
Our main result Theorem \ref{meta} shows that, associated to the  complex linear canonical mapping (flow) $S$ are associated two real (non flow) linear mappings: ${_\alpha T_\com{S}}$ which propagate the T\"oplitz symbol, and .${_\alpha T_{S^{-1}\com{S}}}$ which ``propagate" the off-diagonal property of the T\"oplitz quantization
\end{remark}
\vskip 1cm


We computed in Section \ref{annb} several  simple examples.

\section{Proof of Theorem \ref{thm1}}\label{proof}
We state first the following well-know (and trivial to prove) result.
\begin{lemma}\label{lem1}
Let $S=\left(\begin{matrix}
a&b\\
c&d
\end{matrix}\right)\in SL(2,\R)=Spn(2,\R)$. We define $U=U(S)$ the operator on $L^2(\R)$ defined through its integral kernel
\bea
U(x,y)&=&\frac1{\sqrt{b2\pi\hbar}} e^{-\frac i{2b\hbar}\left(dx^2-2xy+dy^2\right)}\ \ \ \ \ \ \ \ \ \ b\neq 0\nonumber\\
&=&{\sqrt a}\delta(a x  -y)e^{-i\frac c a\frac{x^2}2}\ \ \ \ \ \ \ \ \ \ \ \ \ \ \  b=0\nonumber
\eea
(remark that $U$ is continuous as an opertaor as $b\to 0$).
Then $U$ is unitary and 
\[
U(S)^{-1}\left(\begin{matrix}
P\\Q\end{matrix}\right)U(S)=S\left(\begin{matrix}
P\\Q\end{matrix}\right)
\]
where $Q=\times x,\ P=-i\hbar\frac d{dx}$.
\end{lemma}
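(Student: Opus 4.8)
The statement separates into two independent assertions: that $U=U(S)$ is unitary, and that conjugation by $U$ implements $S$ on the pair $(Q,P)$. The plan is to verify unitarity by exhibiting $U$ as a composition of manifestly unitary building blocks, and to reduce the intertwining relation to a pair of first-order PDEs for the Schwartz kernel $U(x,y)$, which the explicit Gaussian kernel satisfies by a short coefficient-matching computation. Both assertions are elementary; the only thing demanding care is the bookkeeping of signs and normalizations, to which I return at the end.

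For unitarity I would first treat the generic case $b\neq 0$. Writing the phase as $-\frac{1}{2b\hbar}(dx^2-2xy+ay^2)=-\frac{d}{2b\hbar}x^2+\frac{1}{b\hbar}xy-\frac{a}{2b\hbar}y^2$ exhibits $U=M_d\,\mathcal F_b\,M_a$, where $M_a$ (resp. $M_d$) is multiplication by the unimodular Gaussian $e^{-\frac{ia}{2b\hbar}y^2}$ (resp. $e^{-\frac{id}{2b\hbar}x^2}$) and $\mathcal F_b$ is the rescaled semiclassical Fourier transform with kernel $(2\pi\hbar b)^{-1/2}e^{\frac{i}{b\hbar}xy}$. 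Multiplication by a phase is trivially unitary, and the normalization $(2\pi\hbar b)^{-1/2}$ is precisely the one making $\mathcal F_b$ isometric (the $x$-integration produces $2\pi\hbar b\,\delta(y-y')$, which cancels the prefactor), so $U$ is unitary as a product of unitaries. Equivalently one computes $(U^{*}U)(y,y')=\int\overline{U(x,y)}\,U(x,y')\,dx$ directly, the oscillatory $x$-integral collapsing to the correct multiple of $\delta(y-y')$. The degenerate case $b=0$ (so $ad=1$) is a chirped dilation $Uf(x)=\sqrt{a}\,e^{(\cdots)x^2}f(ax)$, whose isometry is the change of variables $\int a|f(ax)|^2\,dx=\|f\|^2$; continuity as $b\to0$ follows by checking that the oscillatory kernel tends to this $\delta$-kernel in the distributional sense.

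For the intertwining I would pass to kernels through the dictionary $(AQ)(x,y)=y\,A(x,y)$, $(AP)(x,y)=i\hbar\,\partial_y A(x,y)$, $(QA)(x,y)=x\,A(x,y)$, $(PA)(x,y)=-i\hbar\,\partial_x A(x,y)$, valid for any operator $A$ with kernel $A(x,y)$ (the two $P$-rules follow from one integration by parts). Writing the sought relation as $QU=U(\alpha Q+\beta P)$ and $PU=U(\gamma Q+\delta P)$ with the matrix $\begin{pmatrix}\alpha&\beta\\\gamma&\delta\end{pmatrix}$ to be determined, the kernel identities become
\[
x\,U(x,y)=\alpha\,y\,U(x,y)+i\beta\hbar\,\partial_y U(x,y),\qquad
-i\hbar\,\partial_x U(x,y)=\gamma\,y\,U(x,y)+i\delta\hbar\,\partial_y U(x,y).
\]
Inserting $\partial_x\phi=\frac{y-dx}{b\hbar}$ and $\partial_y\phi=\frac{x-ay}{b\hbar}$ for the phase $\phi$, and matching the independent coefficients of $x$ and $y$, yields four linear equations that fix $\alpha,\beta,\gamma,\delta$; the determinant constraint $ad-bc=1$ is exactly what closes the otherwise underdetermined off-diagonal entry, turning the residual coefficient into the correct $c$-term. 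One then reads off that $\begin{pmatrix}\alpha&\beta\\\gamma&\delta\end{pmatrix}$ coincides with $S$, up to the $(Q,P)$-versus-$(P,Q)$ ordering and a diagonal sign conjugation $D(\cdot)D$, $D=\mathrm{diag}(1,-1)$, built into the printed sign conventions. The $b=0$ case is verified directly on the dilation-times-chirp form, using $[Q^2,P]=2i\hbar Q$ for the chirp and the scaling action for the dilation.

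The main obstacle is not conceptual but a matter of reconciling conventions: the signs of the cross term and of the global phase in the kernel, together with the branch of $\sqrt{b}$ (the metaplectic/Maslov ambiguity), must be pinned down so that the printed kernel and the defining relation \eqref{meta} agree. I expect this to be a nuisance rather than a genuine difficulty, because both conclusions of the lemma are insensitive to it: conjugation $U^{-1}(\cdot)U$ annihilates any overall constant phase and any branch choice in $\sqrt{b}$, while unitarity uses only that this constant has modulus one. Consequently the PDE computation establishes the intertwining for every admissible branch, and no consistency of the metaplectic cocycle $U(S_1)U(S_2)=\pm U(S_1S_2)$ is needed at this stage.
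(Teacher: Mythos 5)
Your proposal is correct, but it is worth noting that the paper itself offers no proof of this lemma at all: it is introduced with ``well-known (and trivial to prove)'', and the only related argument in the paper is the sketch in the introduction, which runs in the \emph{opposite} direction --- there the relation \eqref{meta} together with unitarity ($U^{-1}(x,y)=\overline{U(y,x)}$) is taken as the \emph{definition} of $U(S)$, and the kernel \eqref{udexy} is obtained as the solution of the resulting equations, modulo a global phase. Your verification route is the natural converse: you prove unitarity independently via the factorization $U=M_d\,\mathcal F_b\,M_a$ (chirp, rescaled Fourier transform, chirp), and you check the intertwining by the same first-order kernel identities that the paper would have to solve, here reduced to coefficient matching in $x$ and $y$ with $\partial_x\phi=\frac{y-dx}{b\hbar}$, $\partial_y\phi=\frac{x-ay}{b\hbar}$, the constraint $ad-bc=1$ producing the $c$-entry. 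This buys something the paper's sketch does not: a self-contained argument in which unitarity is established rather than postulated, and in which the phase/branch ambiguity is visibly irrelevant. Your caution about conventions is also justified, not a defect of your argument: the statement as printed is internally inconsistent (the kernel in the lemma has $dy^2$ where \eqref{udexy} has $ay^2$, the $b=0$ rows of the two displays disagree, and the lemma's relation on $\binom{P}{Q}$ is incompatible with \eqref{meta}'s relation on $\binom{Q}{P}$ for general $S$). Indeed, carrying out your coefficient matching with the kernel of \eqref{udexy} gives exactly
\begin{equation*}
U^{-1}\begin{pmatrix}Q\\P\end{pmatrix}U=\begin{pmatrix}a&-b\\-c&d\end{pmatrix}\begin{pmatrix}Q\\P\end{pmatrix}=DSD\begin{pmatrix}Q\\P\end{pmatrix},\qquad D=\mathrm{diag}(1,-1),
\end{equation*}
which is your $DSD$ claim; so the lemma holds only after the sign/ordering conventions are fixed, and your proof makes that precise where the paper leaves it implicit.
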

\begin{proposition}\label{lem2}
Let, for $\alpha\in\C,\ \Im\alpha>0$ and $z=(q,p)\in\R^2$, the coherent state $\psi^\alpha_z$ be defined by
\[
\psi^\alpha_z(x)
:=
\left(\frac{\Im\alpha}{\pi\hbar|\alpha|^2}\right)^{\frac14}e^{-\frac i{2\hbar\alpha}(x-q)^2}e^{i\frac{px}\hbar}e^{-i\frac{pq}{2\hbar}}.
\]
Let $S=\left(\begin{matrix}
a&b\\
c&d
\end{matrix}\right)\in SL(2,\C)=Spn(2,\C)$ and let  $U=U(S)$ be defined through its integral kernel
\bea
U(x,y)&=&\frac1{\sqrt{b2\pi\hbar}} e^{-\frac i{2b\hbar}\left(dx^2-2xy+ay^2\right)}\ \ \ \ \ \ \ \ \ \ b\neq 0\nonumber\\
&=&{\sqrt d}\delta(d x  -y)e^{i\frac c d\frac{x^2}2}\ \ \ \ \  \ \ \ \ \ \ \ \ \ \ \ \ \  b=0\nonumber
\eea
Let $\wedge$ be the symplectic form $z\wedge z'=pq'-qp'$ and
\be\left\{
\begin{array}{l}
z^S=(p^S,q^S),\ \binom{p^S}{q^S}=S\binom{p}{q},\\
\alpha_S(\alpha)=S\cdot\alpha,\ \begin{pmatrix}
a&b\\c&d
\end{pmatrix}\cdot\theta=\tfrac{a\theta+b}{c\theta+d},\ \theta\in\C\\
z_S^\alpha=: {_\alpha T_S(z)}=(p_S^\alpha,q_S^\alpha)\in\R^2\mbox{ defined by }q_S^\alpha+\alpha_S(\alpha)p_S^\alpha=q^S+\alpha_S(\alpha)p^S\\
{_\alpha\fact_S}=(a+b\alpha^{-1})^{-\frac12}\left(\tfrac{\Im(S\cdot\alpha)^{-1}}{\Im(\alpha^{-1})} \right)^{\frac14}.
\end{array}
\right.
\ee
Then, for any $S,\alpha$ such that $|\alpha_S(\alpha)|<\infty,\ \Im\alpha_S(\alpha)>0$, 
\[
U(S)\psi^\alpha_{z}=e^{i\frac {z^S\wedge z_S^\alpha}{2\hbar}}\psi^{\alpha_S(\alpha)}_{z_S^\alpha}.
\]
\end{proposition}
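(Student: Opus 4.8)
The plan is to evaluate $U(S)\psi^\alpha_z$ directly from the integral kernel and to recognise the result as a Gaussian, i.e.\ as a multiple of a coherent state, and then to identify the three data $\alpha_S(\alpha)$, $z_S^\alpha$ and the scalar prefactor by matching. Before integrating, one conceptual remark predicts the shape of the answer: differentiating the formula for $\psi^\alpha_z$ shows that it is, up to a scalar, the unique $L^2$ solution of $(Q+\alpha P)\psi^\alpha_z=(q+\alpha p)\psi^\alpha_z$, with $Q=\times x$ and $P=-i\hbar\,d/dx$. Since $U(S)$ conjugates $Q$ and $P$ into linear combinations of themselves (Lemma \ref{lem1}), the vector $U(S)\psi^\alpha_z$ is again an eigenvector of an operator of the form $Q+\alpha' P$; hence it must be proportional to a coherent state, the width being transported by the induced M\"obius action $\alpha\mapsto\alpha_S(\alpha)=S\cdot\alpha$ and the centre fixed by reading off the eigenvalue. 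This explains a priori why the output centre is forced to be the \emph{real} point $z_S^\alpha={_\alpha T_S(z)}$ selected by the complex-linear constraint $q_S^\alpha+\alpha_S(\alpha)p_S^\alpha=q^S+\alpha_S(\alpha)p^S$: the map $S$ sends $z$ to the generally complex point $z^S$, and the constraint projects it to the unique real representative compatible with the new width.

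The substance is then the computation of the scalar. For $b\neq0$ I would insert the kernel, reducing everything to a single Gaussian integral $\int_\R e^{-Ay^2+By+C}\,dy$ with $A=\tfrac{i}{2\hbar}\bigl(\tfrac ab+\tfrac1\alpha\bigr)$, completed by $\int_\R e^{-Ay^2+By}\,dy=\sqrt{\pi/A}\,e^{B^2/4A}$. Collecting the $x^2$-, $x$- and $x^0$-parts of the resulting exponent, the quadratic coefficient collapses to $-\tfrac i{2\hbar\,\alpha_S(\alpha)}$ with $\alpha_S(\alpha)=S\cdot\alpha$, the linear coefficient reproduces exactly the defining relation of $z_S^\alpha$, and what remains is a $y$-independent constant built from $\sqrt{\pi/A}/\sqrt{2\pi\hbar b}$ and the normalisation of $\psi^\alpha_z$. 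The degenerate case $b=0$ is quicker and kept separate: the kernel is then a scaled delta, so $U(S)\psi^\alpha_z(x)=\sqrt d\,e^{i\frac cd\frac{x^2}2}\psi^\alpha_z(dx)$, visibly a rescaled-and-chirped Gaussian to be matched in the same fashion.

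The main obstacle is precisely this leftover scalar, not the parameters. One must show it equals the standard normalisation of $\psi^{\alpha_S(\alpha)}_{z_S^\alpha}$ times the pure symplectic-area factor $e^{iz^S\wedge z_S^\alpha/2\hbar}$; since $S$ is complex the exponent $z^S\wedge z_S^\alpha$ is itself complex, which is exactly how the non-unitarity of $U(S)$ is absorbed. Tracking both modulus and argument forces a careful, consistent choice of branch for $\sqrt A$ and for $\sqrt b$, and this is where the hypotheses $\Im\alpha>0$, $\Im(S\cdot\alpha)>0$ and $|S\cdot\alpha|<\infty$ do the real work: they guarantee $\Re A>0$ (so the integral converges and the square root is unambiguous) and that $\alpha_S(\alpha)$ lies in the upper half plane (so the target is a genuine coherent state). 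I would therefore perform the square-root bookkeeping last, verifying with the help of the quantity ${_\alpha\fact_S}$ that the accumulated constant has the correct modulus and that its argument is $z^S\wedge z_S^\alpha/2\hbar$; with $z_S^\alpha$ known explicitly and $z\wedge z'=pq'-qp'$, this final comparison is a short direct computation.
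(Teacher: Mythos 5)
Your strategy is sound and would reach the stated formula, but it takes a genuinely different route from the paper. The paper never integrates the kernel against the translated state: it writes $\psi^\alpha_z=e^{iz\wedge Z/\hbar}\psi^\alpha_0$ with $Z=\binom{P}{Q}$, commutes $U(S)$ through the Weyl translation via the metaplectic covariance $U(S)e^{iz\wedge Z/\hbar}U(S)^{-1}=e^{iS(z)\wedge Z/\hbar}$, computes the single \emph{centered} Gaussian integral $U(S)\psi^\alpha_0={_\alpha\fact_S}\,\psi^{S\cdot\alpha}_0$, and finally rewrites the translate of $\psi^{S\cdot\alpha}_0$ by the complex vector $S(z)$ as $e^{i z^S\wedge z_S^\alpha/2\hbar}\psi^{S\cdot\alpha}_{z_S^\alpha}$ --- which is precisely where the defining relation $q_S^\alpha+\alpha_S(\alpha)p_S^\alpha=q^S+\alpha_S(\alpha)p^S$ and the symplectic-area phase enter. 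That decomposition delivers the three pieces of the answer (width, centre plus phase, amplitude) each from its own short step, whereas your direct computation produces them entangled in one exponent and one prefactor, which is why all the difficulty concentrates in your ``leftover scalar'' paragraph. On the other hand, your opening observation --- that $\psi^\alpha_z$ is the $L^2$ eigenvector of $Q+\alpha P$ with eigenvalue $q+\alpha p$, so conjugation by $U(S)$ forces the output to be a coherent state with width $S\cdot\alpha$ and centre determined by the transported eigenvalue relation --- does not appear in the paper's proof (it is only implicit in Section \ref{sympgeo}) and is arguably the most transparent explanation of why the real point ${_\alpha T_S}(z)$ is selected.

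Two concrete gaps remain. First, your claim that the hypotheses $\Im\alpha>0$, $\Im(S\cdot\alpha)>0$, $|S\cdot\alpha|<\infty$ guarantee $\Re A>0$ is false: for the paper's own free-evolution example $S=\left(\begin{smallmatrix}1&-it\\0&1\end{smallmatrix}\right)$ with $\alpha=i$ and $0<t<1$ one has $\tfrac ab+\tfrac1\alpha=i(\tfrac1t-1)$, hence $\Re A=\tfrac1{2\hbar}(1-\tfrac1t)<0$, while $S\cdot i=i(1-t)$ satisfies every hypothesis; the integral must then be understood by analytic continuation (a point on which the paper is equally silent, but which you assert rather than prove). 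Second, the accumulated constant does not reduce to the bare phase $e^{iz^S\wedge z_S^\alpha/2\hbar}$: the centered Gaussian integral contributes the amplitude ${_\alpha\fact_S}$, unimodular for real $S$ but not for complex $S$, and this factor survives in the paper's own computation (and appears explicitly in the $n$-dimensional Theorem \ref{thm1n}) even though it is absent from the displayed conclusion of the Proposition. Your final matching should therefore target ${_\alpha\fact_S}\,e^{iz^S\wedge z_S^\alpha/2\hbar}$ rather than the pure phase; as written, the verification you propose cannot close.
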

\begin{proof}
We first notice that, denoting $Z=\binom{P}{Q}$,
$$
\pbz=e^{i\frac{z\wedge Z}\hbar}\psi^\alpha_0.
$$
Therefore
\begin{eqnarray}
U(S)\pbz&=&
U(S)e^{i\frac{z\wedge Z}\hbar}U(S)^{-1}U(S)\psi^\alpha_0\nonumber\\
&=&
e^{i\frac{z\wedge S^{-1}(Z)}\hbar}U(S)\psi^\alpha_0\nonumber\\
&=&
e^{i\frac{S(z)\wedge (Z)}\hbar}U(S)\psi^\alpha_0\nonumber\\
&=&
e^{i\frac{S(z)\wedge (Z)}\hbar}
(a+b\alpha^{-1})^{-\frac12}\left(\tfrac{\Im(S\cdot\alpha)^{-1}}{\Im(\alpha^{-1})} \right)^{\frac14}
\psi^{S\cdot\alpha}_0\nonumber\\
&=&
e^{i\frac{S(z)\wedge (Z)}\hbar}
{_\alpha\fact_S}
\psi^{S\cdot\alpha}_0\nonumber
\end{eqnarray}
(we have used that, since $S$ is canonical, $z\wedge S^{-1}(Z)=S^{-1}(S(z))\wedge S^{-1}(Z)=S^{-1}(z)\wedge Z$ and $U(S)\psi^\alpha_0=\psi^{S\dot \alpha}_0$ as an straightforward computation shows.

Now
\begin{eqnarray}
e^{i\frac{S(z)\wedge (Z)}\hbar}\psi^{S\cdot\alpha}_{0}(x)&=&
\left(\frac{\Im\alpha_S(\alpha)}{\pi\hbar|\alpha_S(\alpha)|^2}\right)^{\frac14}e^{-\frac i{2\hbar\alpha_S(\alpha)}(x-q^S)^2}e^{i\frac{p^Sx}\hbar}e^{-i\frac{p^Sq^S}{2\hbar}}
\nonumber\\
&=&
e^{iC}\left(\frac{\Im\alpha_S(\alpha)}{\pi\hbar|\alpha_S(\alpha)|^2}\right)^{\frac14}e^{-\frac i{2\hbar\alpha_S(\alpha)}(x-q_S)^2}e^{i\frac{p_Sx}\hbar}e^{-i\frac{p_Sq_S}{2\hbar}},\ p_S,q_S\in\R\nonumber\\
&=&
e^{iC}\psi^{S\dot\alpha}_z\nonumber
\end{eqnarray}
at the condition that, precisely,
$$
\left\{
\begin{array}{l}
q_S^\alpha+\alpha_S(\alpha)p_S^\alpha=q^S+\alpha_S(\alpha)p^S\\
C=\frac1{2\hbar}\left(\frac{q_S^2.(q^S)^2}{\alpha_S(\alpha)}+(p_Sq_S-p^Sq^S\right)=\frac{\binom{p^S}{q^S}\wedge\binom{p_S}{q_S}}{2\hbar}\mbox{ by using the preceding equality.}
\end{array}
\right.
$$
\end{proof}
The proof of Theorem \ref{meta} follows easily by simple computations after first remarking that
$$
U(S^{-1})\vert\pbz\rl\pbz\vert U(S)=
U(S^{-1})\vert\pbz\rl U(S)^*\pbz\vert=
\vert U(S^{-1})\pbz\rl U(\com{S}^{-1})\pbz\vert 
$$
Then we use Proposition \ref{lem2} in order to express the last term in the preceding equalities:
$$
\vert U(S^{-1})\pbz\rl U(\com{S}^{-1})\pbz\vert =
C\vert\psi^{S^{-1}\cdot\alpha}_{ _\alpha T_{{S}^{-1}}z}\rangle\langle
\psi^{\com{S}^{-1}\cdot\alpha}_{ _\alpha T_{\com{S}^{-1}}z}
\vert; \ C\in\C.
$$
 Note that there is no need to consider out of Proposition \ref{lem2} the exact value of the constant $C$ thanks to the following following trick: one easily compute that
$$
U(S)^{-1}\vert\psi^\alpha_z\rangle\langle\psi^\alpha_z\vert U(S)=L
\vert\psi^{S^{-1}\cdot\alpha}_{_\alpha T_{S^{-1}}z}\rangle\langle
\psi^{{\com{S}}^{-1}\cdot\alpha}_{_\alpha T_{{\com{S}}^{-1}}z}\vert \mbox{ for } L\in\C.
$$
But $U(S)^{-1}\vert\psi^\alpha_z\rangle\langle\psi^\alpha_z\vert U(S)$ is a projector, therefore 
$$
L^2\langle \psi^{S^{-1}\cdot\alpha}_{_\alpha T_{S^{-1}}z}\vert
\psi^{{\com{S}}^{-1}\cdot\alpha}_{_\alpha T_{{\com{S}}^{-1}}z}\rangle=L
\Longrightarrow L=\tfrac1{\langle \psi^{S^{-1}\cdot\alpha}_{_\alpha T_{S^{-1}}z}\vert
\psi^{{\com{S}}^{-1}\cdot\alpha}_{_\alpha T_{{\com{S}}^{-1}}z}\rangle}.
$$
 Performing  finally in the ``T\"oplitz integral" te change of variable $z\to{_\alpha T_\com{S}}(z)$ gives the result. Theorem \ref{thm1} is proved.

\section{Examples}\label{annb}

Several examples are presented in the table below.

Let
$$
D_\alpha(q,p)=
{\langle\psi^{S^{-1}\com{S}\cdot\alpha}
_{{_\alpha T_{S^{-1}\com{S}}}(z)}
\vert\psi^{\alpha}_z\rangle}^{-1},\ z=(q,p).
$$
\vskip 1cm
\begin{tabular}
{|c|c|c|c|c|c|c|}
\hline  
\tiny \begin{color}{white}$\begin{pmatrix}\\\end{pmatrix}$\end{color}\tiny $U$\tiny \begin{color}{white}$\begin{pmatrix}\\\end{pmatrix}$\end{color}&\tiny $S$ &\tiny $S^{-1}S^c$ &\tiny $S^{-1}S^c\cdot i$ &\tiny $_iT_{S^{-1}S^c}$ &\tiny 
$_iT_{S^c}$ &\tiny $D_i(q,p)$\\  \hline

\tiny \begin{color}{white}$\begin{pmatrix}.\\.\end{pmatrix}$\end{color}  \tiny   free ev.
\tiny \begin{color}{white}$\begin{pmatrix}.\\.\end{pmatrix}$\end{color} 
&\ \tiny $\ \ \begin{pmatrix}1&-it\\0&1\end{pmatrix}\ \ $   &\ \tiny $\ \ \begin{pmatrix}1&2it\\0&1\end{pmatrix}\ \ $   &\tiny $ i(1+2t)$ &\ \tiny $\ \ \begin{pmatrix}1&0\\0&\tfrac{1+4t}{1+2t}\end{pmatrix}\ \ $    &\ \tiny $\ \ \begin{pmatrix}1&0\\0&\tfrac{1+2t}{1+t}\end{pmatrix}\ \ $   &\tiny $e^{t\frac{(1+3t)^2}{(1+2t)^2}\frac{p^2}\hbar}$\\   \hline

\tiny \begin{color}{white}$\begin{pmatrix}.\\.\end{pmatrix}$\end{color}\tiny  $\times $ by  $  e^{-tx²} $\tiny\begin{color}{white}$\begin{pmatrix}.\\.\end{pmatrix}$\end{color} &\ \tiny $\ \ \begin{pmatrix}1&0\\-it&1\end{pmatrix}\ \ $   &\ \tiny $\ \ \begin{pmatrix}1&0\\2it&1\end{pmatrix}\ \ $   &\tiny $\frac i{1-2t}$ &\ \tiny $\ \ \begin{pmatrix}\tfrac{1-4t}{1-2t}&0\\0&1\end{pmatrix}\ \ $    &\ \tiny $\ \ \begin{pmatrix}\tfrac{1-2t}{1-t}&0\\0&1\end{pmatrix}\ \ $   &\tiny $e^{t\frac{(1+3t)^2}{(1+2t)^2}\frac{q^2}\hbar}$\\   \hline

\tiny\begin{color}{white}$\begin{pmatrix}.\\.\end{pmatrix}$\end{color}\tiny   dilation \tiny\begin{color}{white}$\begin{pmatrix}.\\.\end{pmatrix}$\end{color} &\ \tiny $\ \ \begin{pmatrix}e^{it}&0\\0&e^{-it}\end{pmatrix}\ \ $   &\ \tiny $\ \ \begin{pmatrix}e^{-2it}&0\\0&e^{2it}\end{pmatrix}\ \ $   &\tiny $e^{-4it}i$ &\ \tiny $\ \ \begin{pmatrix}\tfrac{1-4t}{1-2t}&0\\0&1\end{pmatrix}\ \ $    &\ \tiny $\ \ \begin{pmatrix}\tfrac{1-2t}{1-t}&0\\0&1\end{pmatrix}\ \ $   &\tiny $e^{2i\cos t\frac{ qp}\hbar}$\\   \hline

\tiny\begin{color}{white}$\begin{pmatrix}.\\.\end{pmatrix}$\end{color}\tiny   oscillator\tiny\begin{color}{white}$\begin{pmatrix}.\\.\end{pmatrix}$\end{color} &\ \tiny $\ \ \begin{pmatrix}\cosh t&i\sinh t\\-i\sinh t&\cosh t\end{pmatrix}\ \ $   &\ \tiny $\ \ \begin{pmatrix}\cosh 2t&i\sinh 2t\\-i\sinh 2t&\cosh 2t\end{pmatrix}\ \ $   &\tiny $i$ &\ \tiny $\ \ \begin{pmatrix}e^{-2t}&0\\0&e^{2t}\end{pmatrix}\ \ $    &\ \tiny $\ \ \begin{pmatrix}e^{-t}&0\\0&e^{t}\end{pmatrix}\ \ $    &\tiny $e^{\frac{\sinh 2t(q^2+p^2)}\hbar}$\\   \hline

\tiny\begin{color}{white}$\begin{pmatrix}.\\.\end{pmatrix}$\end{color}\tiny $\dots$\tiny\begin{color}{white}$\begin{pmatrix}.\\.\end{pmatrix}$\end{color} &\ \tiny $\ \ \begin{pmatrix}0&i\\i&0\end{pmatrix}\ \ $   &\ \tiny $\ \ \begin{pmatrix}-1&0\\0&-1\end{pmatrix}\ \ $   &\tiny $i$ &\ \tiny $\ \ \begin{pmatrix}-1&0\\0&-1\end{pmatrix}\ \ $    &\ \tiny $\ \ \begin{pmatrix}-1&0\\0&1\end{pmatrix}\ \ $   &\tiny $1$\\   \hline

\end{tabular}


\vskip 1cm
We finish this section by the same computations for a non-canonical $S$, used in Section \ref{nomcatrans}  below.
\vskip 1cm
\begin{tabular}
{|c|c|c|c|c|c|c|}
\hline  
\tiny \begin{color}{white}$\begin{pmatrix}\\\end{pmatrix}$\end{color}\tiny \tiny \begin{color}{white}$\begin{pmatrix}\\\end{pmatrix}$\end{color}&\tiny $S$ &\tiny $S^{-1}S^c$ &\tiny $S^{-1}S^c\cdot i$ &\tiny $_iT_{S^{-1}S^c}$ &\tiny 
$_iT_{S^c}$ 
\\  \hline


\tiny\begin{color}{white}$\begin{pmatrix}.\\.\end{pmatrix}$\end{color}\tiny its opposite\tiny\begin{color}{white}$\begin{pmatrix}.\\.\end{pmatrix}$\end{color} &\ \tiny $\ \ \begin{pmatrix}0&i\\-i&0\end{pmatrix}\ \ $   &\ \tiny $\ \ \begin{pmatrix}1&0\\0&1\end{pmatrix}\ \ $   &\tiny $i$ &\ \tiny $\ \ \begin{pmatrix}1&0\\0&1\end{pmatrix}\ \ $    &\ \tiny $\ \ \begin{pmatrix}-1&0\\0&-1\end{pmatrix}\ \ $   
\\   \hline

\tiny\begin{color}{white}$\begin{pmatrix}.\\.\end{pmatrix}$\end{color}\tiny anticanonical example\tiny\begin{color}{white}$\begin{pmatrix}.\\.\end{pmatrix}$\end{color} &\ \tiny $\ \ \begin{pmatrix}0&-i\\i&0\end{pmatrix}\ \ $   &\ \tiny $\ \ \begin{pmatrix}-1&0\\0&-1\end{pmatrix}\ \ $   &\tiny $i$ &\ \tiny $\ \ \begin{pmatrix}-1&0\\0&-1\end{pmatrix}\ \ $    &\ \tiny $\ \ \begin{pmatrix}1&0\\0&1\end{pmatrix}\ \ $   
\\   \hline

\end{tabular}



\section{Link with Weyl}
\begin{theorem}
Let $H$ with T\"oplitz symbol $h$. Let us denote by $\sigma^\hbar_{H}$ the Weyl symbol of $H$ (note that $\sigma^\hbar_{H}$ is an entire function). Then
\[
\sigma^\hbar_{U(S)\inv HU(S)}=\sigma^\hbar\circ S
\]
and
\[
\sigma^0_{U(S)\inv HU(S)}(q,p)=\int \widehat{h}(\xi,x)e^{-i(q^S\xi+p^Sx)}dxd\xi.
\]
\end{theorem}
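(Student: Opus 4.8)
The plan is to establish the two displayed formulas in turn, the first for every $\hbar>0$ and the second as its $\hbar\to0$ specialization read through analytic continuation. For the identity $\sigma^\hbar_{U(S)\inv HU(S)}=\sigma^\hbar_H\circ S$ I would begin with the real case, which is the covariance of the Weyl calculus under metaplectic conjugation: from $U(S)\inv(Q,P)U(S)=S(Q,P)$ (Lemma \ref{lem1}, \eqref{meta}), Weyl quantization intertwines the linear symplectic substitution in the symbol with conjugation by $U(S)$, so that $\sigma^\hbar_{U(S)\inv HU(S)}=\sigma^\hbar_H\circ S$ for $S\in SL(2,\R)$ (cf.\ \eqref{pfw}). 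The point is then to promote this from the real group to $SL(2,\C)$ by analytic continuation in the entries $(a,b,c,d)$. The left-hand side is holomorphic in $(a,b,c,d)$ because the kernel \eqref{udexy} of $U(S)$ is explicitly holomorphic in these entries (for $b\neq0$, and continuous across $b=0$), hence so is the kernel of $U(S)\inv HU(S)$ and, through the partial Fourier transform defining it, its Weyl symbol; the right-hand side is holomorphic because $\sigma^\hbar_H$ is entire on $\C^2$, so that $\sigma^\hbar_H(Sz)$ depends holomorphically on the entries. Two holomorphic functions on the connected complexification $SL(2,\C)$ that agree on the totally real subgroup $SL(2,\R)$ coincide, which yields the identity for every complex $S$ admissible in the sense of Remark \ref{rm0}, the conditions $\Im(V\cdot\alpha)>0$ being exactly what keeps the underlying Gaussian integrals convergent.

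For the second formula I would first make the link between the T\"oplitz symbol $h$ and the Weyl symbol $\sigma^\hbar_H$ explicit. Lemma \ref{hushus}, that is \eqref{betabetaprime}, shows that changing the coherent-state parameter acts on symbols by a second-order differential operator of order $\hbar$; comparing the $\alpha$-T\"oplitz calculus with the Weyl calculus in the same fashion produces a relation $\sigma^\hbar_H=e^{\hbar\mathcal D_\alpha}h$ with $\mathcal D_\alpha$ a fixed, $\alpha$-dependent second-order operator. This Gaussian-smoothing representation at once exhibits $\sigma^\hbar_H$ as entire and gives $\sigma^\hbar_H\to h$ as $\hbar\to0$, the limit being the entire extension of $h$ to $\C^2$. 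Feeding this into the first identity and letting $\hbar\to0$ yields $\sigma^0_{U(S)\inv HU(S)}(q,p)=h\bigl(S(q,p)\bigr)=h(q^S,p^S)$, now evaluated at the \emph{complex} point $(q^S,p^S)=S\binom qp$; writing this entire extension by Fourier inversion, $h(q^S,p^S)=\int\widehat h(\xi,x)\,e^{-i(q^S\xi+p^Sx)}\,dx\,d\xi$, is exactly the claimed formula, the integral being precisely the device that carries the evaluation to complex arguments.

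The crux, and the only genuinely delicate point, is the interchange of $\hbar\to0$ with evaluation at the complex argument $(q^S,p^S)$. For real $S$ one merely needs $\sigma^0_H=h$ from the $O(\hbar)$ smoothing, but for complex $S$ the point $S(q,p)$ leaves $\R^2$ and one must control $e^{\hbar\mathcal D_\alpha}h$ uniformly on complex balls. This is where the standing hypothesis on $h$ enters: assuming $\widehat h$ decays fast enough (for instance compact frequency support, or $h$ entire of suitable exponential type) makes $\int\widehat h(\xi,x)e^{-i(q^S\xi+p^Sx)}\,dx\,d\xi$ absolutely convergent and entire in $(q^S,p^S)$, while the admissibility conditions $\Im(V\cdot\alpha)>0$ guarantee that the Gaussian kernels stay integrable all along the continuation, legitimising the limit.
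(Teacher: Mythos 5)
The paper states this theorem with no accompanying proof, so there is no argument of the author's to compare against line by line; measured against the ingredients the paper does supply, your route is the natural one and is essentially sound. For the first identity you invoke exactly the paper's real-case covariance (\eqref{meta}, \eqref{pfw}; note that \eqref{pfw} is written with $S^{-1}$ whereas the theorem has $S$ --- your version is the one consistent with \eqref{meta}) and then continue analytically in the entries of $S$. That is the intended mechanism; the one point to nail down is that the set of admissible $S$ for fixed $\alpha$ (where $\Im(V\cdot\alpha)>0$ for the relevant $V$, cf.\ Remark \ref{rm0}) is open, connected, and contains the totally real submanifold $SL(2,\R)$, so that the identity theorem applies --- a point the paper itself glosses over. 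For the second identity, your Weyl--T\"oplitz ``heat flow'' relation is precisely what Lemma \ref{husweyl} and Corollary \ref{hushus} provide: the Weyl symbol is obtained from $h$ by a Gaussian smoothing $e^{\hbar\mathcal D_\alpha}$ with $\mathcal D_\alpha$ a fixed second-order constant-coefficient operator, which gives entirety of $\sigma^\hbar_H$, the representation $\sigma^\hbar_{U(S)\inv HU(S)}(q,p)=\int\widehat h(\xi,x)\,e^{-i(q^S\xi+p^Sx)}e^{-\hbar Q(\xi,x)}\,dx\,d\xi$ with $\Re Q\ge 0$, and hence the stated formula as $\hbar\to 0$. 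Your observation that this last limit, taken at the complex point $S(q,p)$, requires a decay hypothesis on $\widehat h$ (enough to dominate the exponential growth $e^{|\Im(q^S,p^S)|\,|(\xi,x)|}$), failing which the right-hand side must be read as an oscillatory integral, is correct and makes explicit a hypothesis the paper leaves tacit. The only phrase I would strike is ``the limit being the entire extension of $h$'': a general T\"oplitz symbol has no entire extension, and the Fourier integral on the right-hand side is exactly the substitute for it --- which is in fact what you say two sentences later.
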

\section{Flows on extended phase-space}
Consider on the extended phase space $\P:=T^*\R\times \C^+=\{(z,\beta)\}$ the mapping
\[
\Phi_S: (z,\alpha)\mapsto ({_\alpha T_S}(z),S\cdot\alpha).
\].
One proves easily the following resulst.
\begin{theorem}\label{flot}
\[
\Phi_{SS'}=\Phi_S\Phi_{S'}.
\]
and $S\to\Phi_S$ is a representation of $SP(2n,\C)$.
\end{theorem}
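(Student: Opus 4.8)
The statement splits naturally into its two components under $\Phi_S\colon(z,\alpha)\mapsto({_\alpha T_S}(z),S\cdot\alpha)$. The second (squeezing) component is governed by the M\"obius action, and there the desired identity is just $(SS')\cdot\alpha=S\cdot(S'\cdot\alpha)$, the classical fact that $S\mapsto(S\cdot\,)$ is a homomorphism from $SL(2,\C)$ onto the group of fractional linear transformations. Hence the real content is the first (phase-space) component, i.e. the cocycle-free functional equation
\[
{_\alpha T_{SS'}}(z)={_{S'\cdot\alpha}T_S}\bigl({_\alpha T_{S'}}(z)\bigr),
\]
and the plan is to read this off the quantum side, where it is forced by the representation property of the metaplectic operators together with the rigidity of coherent states.

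Concretely, Proposition \ref{lem2} already gives, for every admissible pair, $U(S)\psi^\alpha_z=e^{i z^S\wedge z_S^\alpha/2\hbar}\,\psi^{S\cdot\alpha}_{{_\alpha T_S}(z)}$; that is, $U(S)$ carries a coherent state to a coherent state of squeezing $S\cdot\alpha$ and center ${_\alpha T_S}(z)$, up to a scalar. I would apply this in two ways to the same vector: directly through $U(SS')$, giving $(\text{const})\,\psi^{(SS')\cdot\alpha}_{{_\alpha T_{SS'}}(z)}$, and through the composition $U(S)U(S')$, giving $(\text{const})\,\psi^{S\cdot(S'\cdot\alpha)}_{{_{S'\cdot\alpha}T_S}({_\alpha T_{S'}}(z))}$. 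Using that $U(SS')=c(S,S')\,U(S)U(S')$ for a nonzero scalar, these two are proportional. Since a coherent state $\psi^\beta_w$ determines the pair $(\beta,w)\in\C^+\times\R^2$ uniquely up to an overall scalar --- matching the quadratic coefficient of the Gaussian exponent fixes $\beta$, and then the linear coefficient, using $\Im\beta\neq0$, fixes $w$ --- equating parameters yields simultaneously the M\"obius identity for the second component and precisely the functional equation for the first, whence $\Phi_{SS'}=\Phi_S\Phi_{S'}$ and $S\mapsto\Phi_S$ is a homomorphism.

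The one ingredient I would justify separately is $U(SS')=c(S,S')U(S)U(S')$. By \eqref{meta} both $U(SS')$ and $U(S)U(S')$ intertwine $\binom{Q}{P}$ with $(SS')\binom{Q}{P}$, the latter via $U(S')^{-1}S\binom{Q}{P}U(S')=SS'\binom{Q}{P}$; consequently $W:=U(SS')\,(U(S)U(S'))^{-1}$ commutes with both $Q$ and $P$, and by irreducibility of the Schr\"odinger representation $W$ is a scalar. Alternatively one can avoid operators entirely and verify the functional equation directly from the defining linear relation $\ell_{S\cdot\alpha}\bigl({_\alpha T_S}(z)\bigr)=\ell_{S\cdot\alpha}(Sz)$ with $\ell_\beta(v):=v_1+\beta v_2$: setting $\beta=S'\cdot\alpha$ and $\gamma=S\cdot\beta$, the claim reduces to $\ell_\beta(v)=0\Rightarrow\ell_\gamma(Sv)=0$ for $v={_\alpha T_{S'}}(z)-S'z$, i.e. to the proportionality on $\C^2$ of the functionals $\ell_\beta$ and $\ell_\gamma\circ S$, which is exactly the relation $\gamma=S\cdot\beta$.

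The hard part is not the algebra but the matter of domains. For complex $S$ the M\"obius action need not preserve $\C^+$, so $\Phi_S\Phi_{S'}$ is defined at $(z,\alpha)$ only when both $S'\cdot\alpha$ and $S\cdot(S'\cdot\alpha)$ lie in $\C^+$ with finite modulus; I would check that this locus coincides with the domain of $\Phi_{SS'}$ and is nonempty, the nonemptiness being guaranteed by Remark \ref{rm0}, after which the identity is an equality of real-analytic maps on the common open set. Finally, the passage from $SL(2,\C)=Sp(2,\C)$ to $Sp(2n,\C)$ is structurally identical: $\alpha$ becomes a symmetric complex matrix in the Siegel upper half-space, ${_\alpha T_S}$ the induced real-linear map on $\R^{2n}$, and the same rigidity-plus-representation argument for the $n$-dimensional metaplectic group delivers the homomorphism $S\mapsto\Phi_S$.
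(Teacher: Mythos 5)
The paper states Theorem \ref{flot} without proof (it is introduced only by ``one proves easily the following result''), so there is no official argument to compare yours against; what matters is whether your proposal stands on its own, and it essentially does. Your two routes are sound and are really the same computation seen from two sides. The quantum route (Proposition \ref{lem2} applied to $U(SS')$ and to $U(S)U(S')$, plus the scalar relation $U(SS')=c(S,S')U(S)U(S')$ obtained from \eqref{meta} and irreducibility, plus rigidity of Gaussians) is in the spirit of the paper, and your rigidity argument --- the quadratic coefficient of the exponent fixes $\beta$, then the linear coefficient together with $\Im\beta\neq0$ fixes the real center --- is correct. The purely linear route is the cleaner one and is presumably what ``one proves easily'' alludes to: ${_\alpha T_S}(z)$ is the unique real point on the complex line $\{w:\ L_{S\cdot\alpha}(w)=L_{S\cdot\alpha}(Sz)\}$, and the cocycle identity for $T$ reduces, exactly as you say, to the statement that $S$ carries the kernel line of $L_\beta$ onto the kernel line of $L_{S\cdot\beta}$, together with the standard identity $(SS')\cdot\alpha=S\cdot(S'\cdot\alpha)$ for the second component.

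Two caveats are worth recording. First, a convention check: with the paper's literal choices ($L_\beta(q,p)=q+\beta p$, $S$ acting on the column $(q,p)^{T}$, $S\cdot\beta=\tfrac{a\beta+b}{c\beta+d}$) one finds $L_\gamma\circ S\propto L_\beta$ for $\gamma=\tfrac{a\beta-b}{-c\beta+d}$ rather than for $\gamma=S\cdot\beta$; the same sign mismatch already occurs in the paper's Section \ref{sympgeo}. This is a defect of the paper's conventions, not of your argument --- whichever sign convention makes Proposition \ref{lem2} true is the one under which your reduction closes, and the group law is unaffected --- but you should fix one convention and verify the proportionality explicitly instead of asserting that it ``is exactly the relation $\gamma=S\cdot\beta$''. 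Second, for complex $S$ the identity $U(SS')=c(S,S')U(S)U(S')$ is only formal (the operators are non-unitary, possibly unbounded, and the composing Gaussian integral need not converge), so on that route the domain discussion you sketch is genuinely needed; the linear route avoids this entirely and proves the identity wherever $\Phi_S$, $\Phi_{S'}$ and $\Phi_{SS'}$ are all defined, which, as you correctly note, is all one can hope for since $\Phi$ is only a partially defined action of $SP(2n,\C)$.
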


\section{Noncommutative geometry interpretation }
\label{noncoi}In this section we give a noncommutative interpretation of the off diagonal Toeplitz representation in Theorem \ref{thm1}.
\subsection{The canonical groupoid}\label{cangroup}\ 

We consider on $\P=T^*\R\times\C^+$ the action of the group $SP(2n,\C)$ 
defined, for any $S\in SP(2n,\C)$ by
$$
(z,\alpha)\to({_\alpha T_S}(z),S\cdot\alpha).
$$
Let us define the groupoid $G$ defined as the semi-direct product $\P\rtimes  SL(2n,\C)$ of $\P$ by $SL(2n,\C)$ \cite[Definition1 p. 104-105 and Section 7]{ac} as  $G=\P\times SL(2n,\C)$, $G^{(0)}=\P\times\{\bf 1\}$ and the functors range and source given by
$$
r((z,\alpha),S)=(z,\alpha),\ \ \ s((z,\alpha),S)=((S(z),S\cdot\alpha)\ \ \forall ((z,\alpha),S)\in\P\times SL(2n,\R).
$$
The $C^*$ algebra associated to the groupoid $G$ is the crossed product $C_0(\P)\rtimes_\Phi SL(2n,\C)$ of the algebra of continuous functions on $\P$ by the action of $SL(2n,\C)$ defined by $\Phi$.
\subsection{Symbols}\label{symb}\ 

Let $H$ be a ${_\alpha \mbox{T}}$\"oplitz operator of symbol $\sigma^T_H$ as given by \eqref{alphaalphaprime}. By Theorem \ref{thm1}, we associate to $U(S)^{-1}HU(S)$ the 
couple
$$
((\sigma^T_H)_{S\cdot\alpha,\alpha}\circ {\alpha T}_{\com{S}},\Phi_{S^{-1}\com{S}})
$$
where $(\sigma^T_H)_{S\cdot\alpha,\alpha}$ is given by \eqref{betabetaprime}.

This can be seen as an  element of the algebra associated to the canonical groupoid defined in Section \ref{cangroup} by the following construction:
 we associate to $((\sigma^T_H)_{S\cdot\alpha,\alpha}\circ {\alpha T}_{\com{S}},\Phi_{S^{-1}\com{S}})$ the function $\sigma^{off}[U(S)^{-1}HU(S)]$  on the canonical groupoid identified with $\P\times\P$ defined by
\begin{eqnarray}
&&\sigma^{off}[U(S)^{-1}HU(S)]((z,\alpha),(z',\alpha'))
:=
{_\alpha T}_{\com{S}}\#(\sigma^T_H)_{S\cdot\alpha,\alpha}(z))\delta((z',\alpha')-\Phi_{S^{-1}\com{S}}(z,\alpha))
,\nonumber
\end{eqnarray}
where ${_\alpha T}_{\com{S}}\#(\sigma^T_H)_{S\cdot\alpha,\alpha}$ designate the push-forward of $(\sigma^T_H)_{S\cdot\alpha,\alpha}$ by ${_\alpha T}_{\com{S}}$.

Conversely,  we ``quantize" the symbol $\sigma_{U(S)^{-1}HU(S)}^{off}$ by the following off-diagonal Toeplitz type quantization formula
\be\label{formquant}
T^{off}[\sigma
^{off}]
:=
\int_{\P\times\P}\sigma^{off}
((z,\alpha),(z',\alpha'))
\tfrac{|\psi^{\alpha'}_{z'}\rangle\langle\psi^\alpha_z|}
{\langle\psi^{\alpha'}_{z'}|\psi^\alpha_z\rangle}
\tfrac{dzd\bar zdz'd\bar{z'}}{2\pi\hbar}.
\ee
\begin{proposition}
$$
T^{off}[\sigma^{off}[U(S)^{-1}HU(S)]]=U(S)^{-1}HU(S).
$$
\end{proposition}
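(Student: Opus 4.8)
The plan is to verify the identity by unfolding both the symbol extraction map $\sigma^{off}$ and the quantization map $T^{off}$, and showing that the delta function built into $\sigma^{off}$ collapses the double integral in \eqref{formquant} back to the single ``off-diagonal Toeplitz integral'' furnished by Theorem \ref{thm1}. Since by construction $\sigma^{off}$ records exactly the data $\bigl({_\alpha T}_{\com S}\#(\sigma^T_H)_{S\cdot\alpha,\alpha},\,\Phi_{S^{-1}\com S}\bigr)$ appearing in that theorem, the proposition is, at bottom, the assertion that $T^{off}$ and $\sigma^{off}$ are mutually inverse on operators of conjugated Toeplitz type, and the real content lies already in Theorem \ref{thm1}.

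First I would substitute the definition of $\sigma^{off}[U(S)^{-1}HU(S)]$ into the quantization formula \eqref{formquant}, obtaining
\begin{equation*}
\int_{\P\times\P}{_\alpha T}_{\com S}\#(\sigma^T_H)_{S\cdot\alpha,\alpha}(z)\,\delta\bigl((z',\alpha')-\Phi_{S^{-1}\com S}(z,\alpha)\bigr)\,\frac{|\psi^{\alpha'}_{z'}\rangle\langle\psi^\alpha_z|}{\langle\psi^{\alpha'}_{z'}|\psi^\alpha_z\rangle}\,\frac{dzd\bar z\,dz'd\bar{z'}}{2\pi\hbar}.
\end{equation*}
Then I would invoke the sifting property of the delta to integrate out the second copy of $\P$, setting $(z',\alpha')=\Phi_{S^{-1}\com S}(z,\alpha)=\bigl({_\alpha T_{S^{-1}\com S}}(z),\,S^{-1}\com S\cdot\alpha\bigr)$. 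This replaces the ket $|\psi^{\alpha'}_{z'}\rangle$ by $|\psi^{S^{-1}\com S\cdot\alpha}_{{_\alpha T_{S^{-1}\com S}}(z)}\rangle$, and identically in the normalizing denominator, leaving a single integral over $z$.

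The remaining step is to recognize this single integral as precisely the right-hand side of Theorem \ref{thm1}. Here I would use that $(\sigma^T_H)_{S\cdot\alpha,\alpha}$ is, by \eqref{alphaalphaprime}--\eqref{betabetaprime}, the very coefficient $h_{\com S\cdot\alpha,\alpha}$ occurring there, and that the push-forward ${_\alpha T}_{\com S}\#$ is the same change of variable performed at the end of the proof of that theorem. Matching the amplitude, the ket, the bra $\langle\psi^\alpha_z|$, and the denominator $\langle\psi^{S^{-1}\com S\cdot\alpha}_{{_\alpha T_{S^{-1}\com S}}(z)}|\psi^\alpha_z\rangle$ term by term, the integral coincides with $U(S)^{-1}HU(S)$, which is the claim.

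The main obstacle I anticipate is not analytic but notational: keeping straight which $\alpha$ is the fixed Toeplitz parameter inherited from $H$ and which is slaved to it through $\Phi_{S^{-1}\com S}$, and confirming that the $\alpha'$-component of the delta is consistent with integrating only against the phase-space measure $dz'd\bar{z'}$. Once the convention that the base point $\alpha$ is held fixed throughout is made explicit — so that the $\alpha'$-slot of the delta merely labels the target coherent-state parameter rather than being integrated over — the verification reduces to a direct substitution, and the identity follows.
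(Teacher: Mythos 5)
Your proposal is correct and is essentially the argument the paper intends: the paper states this proposition without proof precisely because it follows by substituting the definition of $\sigma^{off}$ into \eqref{formquant}, collapsing the $(z',\alpha')$ integral with the delta supported on the graph of $\Phi_{S^{-1}\com{S}}$, and recognizing the resulting single integral as the right-hand side of Theorem \ref{thm1}. Your closing remark about the $\alpha'$-slot of the delta being a label rather than an integrated variable correctly identifies (and resolves) the only notational looseness in the paper's definitions.
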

\subsection{On the (formal) composition of symbols}\label{compsymb}\ 

Conjugating an observable by $U(S)$ correspond classically to a (complex or real) change of variable in the classical underlying paradigm. Therefore, multiplication of functions should be defined on the same system of coordinates, computationally. This leads to  associate to $U(S)^{-1}HU(S)$ the operator of ``multiplication"  acting on $H'$ given by
\begin{eqnarray}
U(S)^{-1}HU(S)\cdot_S H'&:=&
U(S)^{-1}HH'U(S).\nonumber
\end{eqnarray}

This gives rise to the following multiplication of symbols: when $H,H'$ are Toeplitz operators, so is (asymptotically) $HH'$ and its symbol is at leading order the product of the symbol of $H$ by the one of $H'$. Therefore the symbol of $U(S)^{-1}HU(S)\cdot_S H'$ is the groupoid composition of the one of $U(S)^{-1}HU(S)$ by th (trivial) one of $H'$. 

In the case where $H':=U(S')^{-1}H^{in}U(S')$

\begin{eqnarray}
U(S)^{-1}HU(S)\cdot_S H'&:=&
U(S)^{-1}HH'U(S).\nonumber
\\
&=&U(S)^{-1}HU(S')^{-1}H'U(S')U(S).\nonumber
\end{eqnarray}
using the result of Theorem \ref{thm1}
\begin{eqnarray}
&&U(S')^{-1}HU(S')\nonumber\\
&=&\int 
{_\alpha T_\com{S'}}\#h_{\com{S'}\cdot \alpha,\alpha}(_\alpha T_{\com{S'}}z)
\tfrac{\vert\psi^{{S'}^{-1}\com{{S'}}\cdot\alpha}
_{{_\alpha T_{{S'}^{-1}\com{{S'}}}}(z)}
\rangle\langle\psi^{\alpha}_z\vert}
{\langle\psi^{{S'}^{-1}\com{{S'}}\cdot\alpha}
_{{_\alpha T_{{S'}^{-1}\com{{S'}}}}(z)}
\vert\psi^{\alpha}_z\rangle}
\frac{dzd\bar z}{2\pi\hbar},
\nonumber
\end{eqnarray}
and  (formally)
$$
H\vert\psi^{{S'}^{-1}\com{{S'}}\cdot\alpha}
_{{_\alpha T_{{S'}^{-1}\com{{S'}}}}(z)}
\rangle=h({{_\alpha T_{{S'}^{-1}\com{{S'}}}}(z)})
\vert\psi^{{S'}^{-1}\com{{S'}}\cdot\alpha}
_{{_\alpha T_{{S'}^{-1}\com{{S'}}}}(z)}
\rangle +O(\hbar)
$$
we get formally the usual groupoid composition of symbols.
 
\section{Non canonical transforms}
\label{nomcatrans}
%
%

It is striking to notice that the definition of the metaplectic representation as defined by \eqref{udexy}, namely, for a matrix $S=\begin{pmatrix}
a&b\\c&d
\end{pmatrix} $ of determinant one,
 the operator of integral kernel given by \eqref{udexy},
depends only on the numbers $a,b,d$. The absence of $c$ is hidden by the fact that, thanks to $\det S=1$, $c=\tfrac{ad-1}b$.

On the contrary, the main formula in Theorem \ref{thm1} is expressed directly on the matrix $S$ and therefore admits an extension to the case $\det S\neq 1$. Note that this extension is highly non-trivial also in the real case $S\in M(2,\R)$.

In the present paper, we will limit ourself to the case $\det S=\pm 1$.  
We set
$$
M^\pm(2,\C):=\{S\in SL(2,\C),\ \det S=\pm 1\}.
$$

\begin{definition}\label{defnonca}
Let
\[
H=\int h(z)\vert\psi^\beta_z\rangle\langle\psi^\beta_z\vert\frac{dzd\bar z}{2\pi\hbar}.
\]
Define, for $S
\in M^\pm(2,\C),\ \det S\neq0,$ and $\Im\alpha>0$, the real $2\times 2$ matrix ${_\alpha T_S}$ by 
$$
{_\alpha T_S(z)}=(q_S^\alpha,p_S^\alpha)\in\R^2\mbox{ defined by }q_S^\alpha+\alpha_S(\alpha)p_S^\alpha=q^S+\alpha_S(\alpha)p^S,\ 
\begin{pmatrix}
q^S\\p^S
\end{pmatrix}:=S(z),\ z=(q,p).
$$
For any $S,\alpha$ such that $|V\cdot \alpha|<\infty,\ \Im(V\cdot \alpha)>0,\ V=S^{-1},\com{S}^{-1}, S^{-1}\com{S}$, 
we define the composition operator $\ccomp{S}$ acting on $H$ by
\be
\ccomp{S}H=\int 
 {_\alpha T_{\com{S}}}\# h_{\com{S}\cdot \alpha,\alpha}((-1)^{\frac{1-\det{S}}2}z)
\tfrac{\vert\psi^{S^{-1}\com{S}\cdot\alpha}
_{{_\alpha T_{S^{-1}\com{S}}}(z)}
\rangle\langle\psi^{\alpha}_z\vert}
{\langle\psi^{S^{-1}\com{S}\cdot\alpha}
_{{_\alpha T_{S^{-1}\com{S}}}(z)}
\vert I^{\frac{1-\det{S}}2}\psi^{\alpha}_z\rangle}
\frac{dzd\bar z}{2\pi\hbar},\label{defpm}
\ee
where $I$ is the parity operator defined on $L^2(\R)$ by $I\psi(x)=\psi(-x)$.
\end{definition}
When $\det{S}=1$, \eqref{defpm} is the same as the result of Theorem \ref{thm1} so that, in this case, $\ccomp{S}\cdot=U(S)^{-1}\cdot U(S)$.
When $\det{S}=-1$, the presence of the operator $I$ in the normalization constant $\frac 1
{\langle\psi^{S^{-1}\com{S}\cdot\alpha}
_{{_\alpha T_{S^{-1}\com{S}}}(z)}
´\vert I\psi^{\alpha}_z\rangle}$  and of the factor $(-1)^{\frac{1-\det{S}}2}$ in the argument of $ {_\alpha T_{\com{S}}}\# h_{\com{S}\cdot \alpha,\alpha}$ follows from the following  two arguments.

 First, we have seen right after its statement, that the key stone of the proof of Theorem \ref{thm1} was the fact that the normalization constant $L$ ensures  $U(S^{-1})|\psi^\alpha_z\rangle\langle\psi^\alpha_z|U(S)$
 to be a projector. 
 
 \noindent Let us remind that, for two operators $R,R'$, the Wigner function of the product $RR'$ is expressed as the twisted convolution of the Wigner functions of $R$ and $R'$. Namely:
 \be\label{twistconv0}
 W[RR'](z)=\int W[R](z-z')W[R'](z)e^{i\frac{z\wedge z'}\hbar} dz
 \ee
 
 \noindent Therefore, when $\det{S}=1$,  the requirement of being a projector can be  seen as following the fact that Wigner functions of pure states composed by canonical transforms satisfy the same equality than the original one, namely
 \be\label{twistconv}
 \int S\#W(z-z')S\#W(z)e^{i\frac{z\wedge z'}\hbar} dz'=S\#W(z)\Leftrightarrow
 \int W(z-z')W(z')e^{i\frac{z\wedge z'}\hbar} dz'=W(z)
 \ee
 since $\det{S}=1\Rightarrow S(z)\wedge S(z')=z\wedge z'$. 
 \noindent When $\det(S)=-1$, the left hand side of \eqref{twistconv} becomes
\be\label{twistconv2}
 \int S\#W(z-z')S\#W(z)e^{-i\frac{z\wedge z'}\hbar} dz'=S\#W(z)\Leftrightarrow
 \int W(z-z')W(-z')e^{i\frac{z\wedge z'}\hbar} dz'=W(z)
 \ee 
 leading to, if $R$ denotes the operator of Wigner function $W$, $\ccomp{S}RI\ccomp{S}R=\ccomp{S}R$. This shows easily that $I$ has to be introduced in $\langle\psi^{S^{-1}\com{S}\cdot\alpha}
_{{_\alpha T_{S^{-1}\com{S}}}(z)}
´\vert I\psi^{\alpha}_z\rangle$.

Secodnly, in the course of the proof of Theorem \ref{thm1}, we have used the equality 
\be\label{eqweyl}
U(S^{-1})e^{i\frac{z\wedge Z}\hbar}U(S)=e^{i\frac{z\wedge S(Z)}\hbar}
=e^{i\frac{S^{-1}(z)\wedge Z}\hbar},\ Z=\binom{x}{-i\hbar\frac d{dx}},
\ee
due to the fact that $S$ is canonical.
When $\det{S}=-1$, \eqref{eqweyl} becomes
$$
U(S^{-1})e^{i\frac{z\wedge Z}\hbar}U(S)=e^{i\frac{z\wedge S(Z)}\hbar}
=e^{-i\frac{S^{-1}(z)\wedge Z}\hbar},
$$
responsible for the change $z\to -z$ in $\vert\psi^{S^{-1}\com{S}\cdot\alpha}
_{{_\alpha T_{S^{-1}\com{S}}}(z)}
\rangle\langle\psi^{\alpha}_z\vert$ and therefore in the argument of $ {_\alpha T_{\com{S}}}\# h_{\com{S}\cdot \alpha,\alpha}$ by change of variable in the integration in \eqref{defpm}.

Note again that, on the contrary of the symplectic case, $\ccomp{S}$ is not in general a conjugation.
Nevertheless, since $\ccomp{S}H$ has the form 
$\ccomp{S}H=\int f(z)|\psi^{\alpha'(\alpha)}_{z'(z)}\rangle\langle\psi^{\alpha}_{z}|dz$, 
one can  extend $\ccomp{S}$, asin the conjugation case, to more general operator than the Toeplitz class and define $\ccomp{S}\ccomp{S'}$ by the same formula as in definition \ref{defnonca} after first replacing  $(z,\alpha)$ by $({{_\alpha T_{S^{-1}\com{S}}}(z)},{S^{-1}\com{S}\cdot\alpha})$ and then multiplying by the weight $$
\left.\tfrac{{_\alpha T_{\com{S}}}\# h_{\com{S}\cdot \alpha,\alpha}(z)}
{{\langle\psi^{S^{-1}\com{S}\cdot\alpha}
_{{_\alpha T_{S^{-1}\com{S}}}(z)}
\vert\psi^{\alpha}_z\rangle}}
\right|_{S=S'}.$$

With this definition of $\ccomp{S'}
$, $\ccomp{\cdot}$  is a representation of $M^\pm(d,\C)$:
\begin{theorem}
$$
\ccomp{S'}\ccomp{S}=\ccomp{S'S}\ \mbox{ for all } S,S'\mbox{ in }M^\pm(2,\C).
$$ 
\end{theorem}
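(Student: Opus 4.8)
The plan is to verify the identity within each of the four sign-sectors $(\det S,\det S')\in\{\pm1\}^2$, using the fact that $\ccomp{S}$ preserves the off-diagonal T\"oplitz class: by \eqref{defpm} the operator $\ccomp{S}H$ is again of the form $\int f(z)\,|\psi^{\alpha'(\alpha)}_{z'(z)}\rangle\langle\psi^\alpha_z|\,\tfrac{dzd\bar z}{2\pi\hbar}$, whose ket-label is governed by the geometric map $\Phi_{S^{-1}\com S}$ and whose amplitude is an explicit pushforward divided by a coherent-state overlap. Two such operators coincide as soon as their three data agree: the bra-to-ket geometric map $(z,\alpha)\mapsto(z'(z),\alpha'(\alpha))$, the symbol function $f$, and the normalizing overlap. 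So it suffices to match these three data for $\ccomp{S'}\ccomp{S}H$ and for $\ccomp{S'S}H$. In the purely canonical sector $\det S=\det S'=1$, $\ccomp{S}=U(S)^{-1}(\cdot)U(S)$ is an honest conjugation and the identity reduces to the group law $U(A)U(B)=U(AB)$ of the metaplectic representation (which holds up to a phase that cancels under conjugation and whose composition order is dictated by the defining relation \eqref{meta}); this is the base case.

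For the geometric data I would invoke Theorem \ref{flot}: the map $\Phi_S:(z,\alpha)\mapsto({_\alpha T_S}(z),S\cdot\alpha)$ is a representation of $SP(2,\C)$, equivalently the matrices ${_\alpha T_S}$ obey the cocycle ${_\alpha T_{AB}}={_{B\cdot\alpha}T_A}\circ{_\alpha T_B}$ together with the M\"obius law $(AB)\cdot\alpha=A\cdot(B\cdot\alpha)$. Feeding into this the elementary identities $\com{S'S}=\com{S'}\,\com S$ and
\[
(S'S)^{-1}\com{S'S}=S^{-1}S'^{-1}\,\com{S'}\,\com S=S^{-1}(S'^{-1}\com{S'})\com S ,
\]
the ket-labels produced by first applying $\ccomp{S}$ (which carries the factor $S^{-1}\com S$) and then $\ccomp{S'}$ (carrying $S'^{-1}\com{S'}$) collapse onto the single label attached to $(S'S)^{-1}\com{S'S}$ that appears in $\ccomp{S'S}H$, and likewise for the M\"obius action on $\alpha$. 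This is precisely the content Theorem \ref{flot} was designed to supply.

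The determinant signs are handled in parallel. Using multiplicativity $\det(S'S)=\det S\,\det S'$, the identities $(-1)^{\frac{1-\det S}2}=\det S$ and $I^{\frac{1-\det S}2}$ (which equals $I$ exactly on the $\det=-1$ sector), together with $I^2=\mbox{Id}$ and $I\psi^\alpha_z=\psi^\alpha_{-z}$, one checks that the sign flip $z\mapsto(\det S)z$ in the argument of the pushed-forward symbol and the parity operator inside the normalizing overlap combine additively modulo $2$, exactly as $\frac{1-\det(S'S)}2\equiv\frac{1-\det S}2+\frac{1-\det S'}2\ (\mathrm{mod}\ 2)$ requires; in particular a product of two $\det=-1$ factors returns to the canonical sector with trivial net parity, consistent with $\det(S'S)=1$.

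What remains, and what I expect to be the genuine obstacle, is the telescoping of the amplitudes. Applying $\ccomp{S'}$ to $\ccomp{S}H$ introduces, besides the new pushforward ${_\alpha T_{\com{S'}}}\#h_{\com{S'}\cdot\alpha,\alpha}$, a fresh normalizing overlap evaluated at the already-transformed labels, and one must show that the intermediate overlap produced by $\ccomp{S}$ cancels against part of it, leaving exactly the single overlap of $\ccomp{S'S}H$. Each overlap being an explicit exponential of a quadratic form in the ${_\alpha T}$-images, the cocycle of Theorem \ref{flot} guarantees that these quadratic forms add, so that the composite reproduces the defining relation for $\ccomp{S'S}$. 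I would moreover reuse the device from the proof of Theorem \ref{thm1}, namely that the normalization is pinned down by the projector requirement so that the prefactor $L$ need never be evaluated explicitly; the only point demanding real care is the parity bookkeeping, since the insertion of $I$ is exactly what turns the twisted convolution \eqref{twistconv} into its $\det=-1$ avatar \eqref{twistconv2}, and one must confirm that two such insertions reassemble into the correct single factor $I^{\frac{1-\det(S'S)}2}$.
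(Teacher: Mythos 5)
The paper states this theorem without any proof, so your outline can only be measured against the framework it sets up; within that framework you identify the right ingredients (Theorem \ref{flot}, the metaplectic group law, multiplicativity of $\det$ modulo $2$), but the two steps that carry the actual content are left unverified, and one of them is argued incorrectly. Concerning the collapse of the ket-labels: you claim that the displacements $S^{-1}\com{S}$ and then $S'^{-1}\com{S'}$ ``collapse onto'' the single displacement $(S'S)^{-1}\com{S'S}=S^{-1}S'^{-1}\com{S'}\com{S}$ via Theorem \ref{flot}. But Theorem \ref{flot} gives $\Phi_{A}\circ\Phi_{B}=\Phi_{AB}$ applied to the two displacement matrices, i.e.\ $\Phi_{(S'^{-1}\com{S'})(S^{-1}\com{S})}$, and $(S'^{-1}\com{S'})(S^{-1}\com{S})\neq S^{-1}S'^{-1}\com{S'}\com{S}$ in general: the factor $S^{-1}$ must migrate past $S'^{-1}\com{S'}$. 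The reassembly in fact occurs because the bra and the ket of $\ccomp{S}H$ are transported by \emph{different} matrices ($\com{S'}^{-1}$ on the bra side, $S'^{-1}$ on the ket side, cf.\ the identity $U(S^{-1})\vert\pbz\rl\pbz\vert U(S)=\vert U(S^{-1})\pbz\rl U(\com{S}^{-1})\pbz\vert$ used in Section \ref{proof}) and only after the change of variables $z\to{_\alpha T_{\com{S'}}}(z)$; your two-line matrix identity is necessary but does not by itself produce the collapse. Relatedly, in the purely canonical sector the conjugation picture gives $\ccomp{S'}\ccomp{S}H=(U(S)U(S'))^{-1}H\,U(S)U(S')=\ccomp{SS'}H$, so you must either show that the bookkeeping reverses the order or flag the discrepancy with the stated $\ccomp{S'S}$.

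Concerning the amplitudes: you propose to reuse the projector device from the proof of Theorem \ref{thm1}, but that device is unavailable exactly where it is needed. When $\det S=-1$, $\ccomp{S}$ is not a conjugation and $\ccomp{S}(\vert\pbz\rl\pbz\vert)$ is not a projector; the paper replaces the projector identity by the relation $\ccomp{S}R\,I\,\ccomp{S}R=\ccomp{S}R$ coming from \eqref{twistconv2}, and it is the stability of this relation (with the correct power of $I$) under composition that has to be established. Likewise, the assertion that ``the quadratic forms add'' because of the cocycle ignores the prefactors ${_\alpha\fact_{S}}$ and the symplectic phases $e^{i S(z)\wedge{_\alpha T_S}(z)/2\hbar}$ of Proposition \ref{lem2}, which enter the intermediate normalizing overlap and must be shown to cancel against the weight that Section \ref{nomcatrans} inserts by hand when it \emph{defines} the composite $\ccomp{S'}\ccomp{S}$ by substitution of $(z,\alpha)\to({_\alpha T_{S^{-1}\com{S}}}(z),S^{-1}\com{S}\cdot\alpha)$ followed by multiplication by an explicit factor. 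Since $\ccomp{S}H$ is no longer a diagonal T\"oplitz operator, formula \eqref{defpm} cannot be applied to it verbatim, so the theorem must be verified against that substitution-and-weight definition — a point your plan never engages with. Until these computations are carried out, what you have is a plausible outline rather than a proof.
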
 
 
As a significant example useful in the next section, let us consider the case $S=\begin{pmatrix}0&i\\-i&0\end{pmatrix}$ computed in the second table of Section \ref{annb}. We get, in the case $\alpha=i$,
\be\label{exas}
\ccomp{S}H=\int 
h(z)
\vert\psi^{i}
_{-z}
\rangle\langle\psi^{i}_z\vert\frac{dzd\bar z}{2\pi\hbar},
\ee
since $\langle\psi^i_{-z}\vert I\psi^i_z\rangle=\langle\psi^i_{-z}\vert\psi^i_{-z}\rangle=1$.

In other words,  $\ccomp{S}H$ is the quantization of the symbol (with a slight abuse of notation)
\be\label{exassymb}
\sigma^{off}[\ccomp{S}H]((z,i),(z',i))
=
h(z)\delta(z'+z).
\ee
The case $S=\begin{pmatrix}0& -i\\ i&0\end{pmatrix}$ can be treated the same way and leads to, thanks to the same table,
Both cases $S=\begin{pmatrix}0&i\\-i&0\end{pmatrix},\ \begin{pmatrix}0& -i\\ i&0\end{pmatrix}$  are shown to be underlying the exchange operator in the framework of quantum spin-statistics as extensively studied in \cite{tp4}.
\section{Link with complex symplectic geometry}\label{sympgeo}
$\ec{\alpha}{(p,q)}$ is in fact a complex WKB state associated to the Lagrangian (complex)
$$
{_\alpha\Lambda_{(p,q)}}=\{(x,\nabla_x(- \frac{(x-q)^2}{2\alpha}+px)=(x,-\frac{x-q}\alpha+p),\ x\in\R\}.
$$
Note that ${_\alpha\Lambda_{(p,q)}}\cap\R=\{(q,p)\}$ and $(x,\xi)\in{_\alpha\Lambda_{(p,q)}}\Longleftrightarrow x+\alpha\xi=q+\alpha p$ so that
$$
{_\alpha\Lambda_{(p,q)}}=\{(x,\xi)/x+\alpha\xi=q+\alpha p\}.
$$
Therefore
\begin{eqnarray}
S^{-1}({_\alpha\Lambda_{(p,q)}})&=&
\{q^{S^{-1}}(x,\xi)+\alpha p^{S^{-1}}(x,\xi)=q+\alpha p\}\nn
&=&{_{S\cdot\alpha}}\Lambda_{{_\alpha T_S}(q,p)}\nonumber
\end{eqnarray}

\section{Higher dimensions}\label{highdim}
The whole discussion above easily generalizes in higher dimension $n$.

To $\alpha\in M_n(\C),\ \alpha^T=\alpha,\ \Im{\alpha}>0$ and $z\in T^*\R^n\sim\R^{2n}$ we associate the vector in $L^2(\R^n,dx)$
$$
\ec{\alpha}{z}(x)
=(\pi\hbar)^{-\frac n4}(\det\Im(\alpha^{-1}))^{\frac14}
e^{-\frac i\hbar(x-q)\alpha^{-1}(x-q)+i\frac{p\cdot x}\hbar-i\frac{q\cdot p}{2\hbar}}
$$
Formulas \eqref{alphaalphaprime}-\eqref{betabetaprime} become, for  $\alpha\in M_n(\C),\ \alpha^T=\alpha,\ \Im{\alpha}>0$
\begin{eqnarray}
\int_\C h_\alpha(z)\vert\ec{\alpha}{z}\rangle\langle\ec{\alpha}{z}\vert\frac{dzd\bar z}{2\pi\hbar}
&=&
\int_\C h_{\alpha,\alpha'}(z)\vert\ec{\alpha'}{z}\rangle\langle\ec{\alpha'}{z}\vert\frac{dzd\bar z}{2\pi\hbar}\label{alphaalphaprimen}\\
&\Updownarrow&\nonumber\\
h_{\alpha,\alpha'}&=&e^{-i\frac\hbar4\nabla_\xi(\alpha-\alpha')\nabla_\xi+i\frac\hbar4\nabla_x(\frac1\alpha-\frac1{\alpha'})\nabla_x}h_{\alpha'}\label{betabetaprimen}
\end{eqnarray}
for $ \Im{(\alpha-\alpha')}>0.\ \Im{(\frac1\alpha-\frac1{\alpha'})}<0$.

We denote for $S= \begin{pmatrix}
A&B\\C&D
\end{pmatrix}\in SP(n,\C)$, $z=(q,p)\in\R^{2n}$ and $\alpha\in M_n(\C), \alpha^T=\alpha, \Im{\alpha}>~0$, 
$$
\com{S}= \begin{pmatrix}
\bar  A&\bar B\\\bar C&\bar D
\end{pmatrix},\ S (z)=\begin{pmatrix}
A&B\\C&D
\end{pmatrix}\begin{pmatrix}
q\\p
\end{pmatrix}\mbox{ and }
S\cdot\alpha=({C\alpha+D})^{-1}({A\alpha+B}). 
$$
Moreover, $\wedge$ will denote the symplectic form pn $T^*\R$, $z\wedge z'=p\cdot q'-q\cdot p'$.

By easy computations of Gaussian integrals and the same arguments as in the proof of Proposition \ref{lem2} we get the following result.
\begin{proposition}[Proposition \ref{lem2} in dimension $n$]\label{lem2n}\ 

Let $S=\left(\begin{matrix}
A&B\\
C&D
\end{matrix}\right)\in Spn(2n,\C)$ and let  $U=U(S)$ be defined through its integral kernel
\bea
U(x,y)&=&\frac1{\sqrt{2\pi\hbar\det{B}}} e^{-\frac i{2\hbar}\left(xDB^{-1}x-2xB^{-1}y+yB^{-1}Ay\right)}\ \ \ \ \ \ \ \ \ \ \det B\neq 0\nonumber\\
\int U(x,y)e^{-i\frac{y\cdot\xi}\hbar}\tfrac{dy}{(2\pi\hbar)^{\frac n2}}&=&
\frac{i^{-\frac n2}}{\sqrt{2\pi\hbar\det{A}}} e^{-\frac i{2\hbar}\left(xCA^{-1}x-2xA^{-1}\xi+\xi A^{-1}B\xi\right)}
\ \ \ \ \ \ \ \ \ \ \   \det A\neq 0\nonumber
\eea
(note that $\det S=1\Rightarrow\det B\neq 0\mbox{ or }\det A\neq 0$).

Then, for any $S,\alpha$ such that $|\alpha_S(\alpha)|<\infty,\ \Im\alpha_S(\alpha)>0$ and any $z\in\R^{2n}$, 
\[
U(S)\ec{\alpha}{z}=e^{i\frac {S(z)\wedge {_\alpha T}_S(z)}{2\hbar}}
\ec{S\cdot\alpha}{{_\alpha T}_S(z)}.
\]
where  the real $2\times 2$ matrix ${_\alpha T_S}$ by 
$$
{_\alpha T_S(z)}=(p_S^\alpha,q_S^\alpha)\in\R^2\mbox{ defined by }q_S^\alpha+S\cdot\alpha \ p_S^\alpha=q^S+S\cdot\alpha \ p^S,\ 
\begin{pmatrix}
p^S\\q^S
\end{pmatrix}:=S(z),\ z=(q,p).
$$
\end{proposition}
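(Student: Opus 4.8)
The plan is to reproduce, step for step, the three-part structure of the proof of Proposition \ref{lem2}, replacing every scalar Gaussian by its $n\times n$ matrix counterpart. First I would reduce to the centered state $z=0$. Writing $Z=\binom{P}{Q}$ with $Q=\times x$ and $P=-i\hbar\nabla_x$, one still has $\ec{\alpha}{z}=e^{i\frac{z\wedge Z}\hbar}\ec{\alpha}{0}$, so that
\[
U(S)\ec{\alpha}{z}=\Big(U(S)e^{i\frac{z\wedge Z}\hbar}U(S)^{-1}\Big)\,U(S)\ec{\alpha}{0}.
\]
The $n$-dimensional metaplectic covariance $U(S)^{-1}ZU(S)=S(Z)$ (the analogue of Lemma \ref{lem1}, obtained by the same differentiation of the kernel) together with the symplectic invariance $z\wedge S^{-1}(Z)=S(z)\wedge Z$ turns the conjugated Heisenberg translation into $e^{i\frac{S(z)\wedge Z}\hbar}$, exactly as in the one-dimensional case.

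The heart of the argument is the centered identity $U(S)\ec{\alpha}{0}={_\alpha\fact_S}\,\ec{S\cdot\alpha}{0}$, which is a pure Gaussian integral. In the regime $\det B\neq0$ I would insert the kernel $U(x,y)$ and the Gaussian $\ec{\alpha}{0}(y)$, then evaluate the $y$-integral by the matrix Gaussian formula $\int_{\R^n}e^{-\frac i{2\hbar}yMy}\,dy=(2\pi\hbar)^{n/2}\big(\det(-iM)\big)^{-1/2}$, where $M$ combines $B^{-1}A$ with $\alpha^{-1}$. Completing the square in $x$ and invoking the block relations characterizing $Sp(2n,\C)$ (the identity $A^TD-C^TB=I$ and the symmetry of $A^TC$ and $B^TD$) shows that the quadratic form of the resulting Gaussian in $x$ is exactly $(S\cdot\alpha)^{-1}=(A\alpha+B)^{-1}(C\alpha+D)$. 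Collecting the prefactors yields the normalization ${_\alpha\fact_S}$ announced in the statement, in its $n$-dimensional form $\big(\det(A+B\alpha^{-1})\big)^{-1/2}\big(\det\Im(S\cdot\alpha)^{-1}/\det\Im(\alpha^{-1})\big)^{1/4}$, while the positivity $\Im(S\cdot\alpha)>0$ — needed for $\ec{S\cdot\alpha}{0}$ to be an admissible coherent state — follows from the same symplectic relations. The dual, momentum-side kernel given in the statement handles $\det A\neq0$; since $\det S=1$ forces $\det A\neq0$ or $\det B\neq0$ and the two expressions agree where both apply, this settles every $S$ in the admissible range $|\alpha_S(\alpha)|<\infty$.

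Finally I would translate back: applying $e^{i\frac{S(z)\wedge Z}\hbar}$ to $\ec{S\cdot\alpha}{0}$ recenters the Gaussian, and matching the shifted quadratic, linear and constant terms forces the new center to be the real point ${_\alpha T_S}(z)$ characterized by $q_S^\alpha+S\cdot\alpha\,p_S^\alpha=q^S+S\cdot\alpha\,p^S$ — this single complex-linear relation is precisely what pins down the unique real $z_S^\alpha$ — while the leftover scalar phase assembles into $e^{i\frac{S(z)\wedge{_\alpha T_S}(z)}{2\hbar}}$, just as the constant $C$ was identified at the end of the proof of Proposition \ref{lem2}.

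I expect the main obstacle to lie entirely in the matrix bookkeeping of the centered Gaussian integral: correctly tracking the branch of $\big(\det(\cdots)\big)^{1/2}$ (the metaplectic/Maslov sign, which is why the identity holds only up to the global phase fixed by \eqref{meta}), verifying that the completed-square quadratic form collapses to the Möbius expression $(C\alpha+D)^{-1}(A\alpha+B)$ through the $Sp(2n,\C)$ block identities, and confirming that $\Im(S\cdot\alpha)>0$ persists throughout the admissible range. None of this is conceptually new relative to the one-dimensional proof, but the noncommutativity of the blocks $A,B,C,D$ and of $\alpha$ makes each manipulation delicate.
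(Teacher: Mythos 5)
Your proposal follows the paper's own proof essentially verbatim: reduction to $z=0$ via the Heisenberg translation and metaplectic covariance, the matrix Gaussian integral over $y$ combined with the $Sp(2n,\C)$ block identities to collapse the completed square to $(S\cdot\alpha)^{-1}$ with prefactor $\bigl(\det(A+B\alpha^{-1})\bigr)^{-1/2}\bigl(\det\Im(S\cdot\alpha)^{-1}/\det\Im(\alpha^{-1})\bigr)^{1/4}$, the Fourier-transform reduction when $\det B=0$, and the final recentering argument carried over from dimension one. The approach and all key steps coincide with the paper's proof of Proposition \ref{lem2n}.
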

\begin{proof}
The proof consists in elementary Gaussian integrals computations. We perform it in the case  $\det B\neq 0$, the case $\det A\neq 0$ being the same by Fourier transform.

Denoting $Z$ the vector $Z=\binom{x}{-i\hbar\frac{d}{dx}}$ we get
\begin{eqnarray}
U(S)\ec{\alpha}{z}
&=&U(S)e^{\frac i\hbar z\wedge Z}\ec{\alpha}{0}\nonumber\\
&=&U(s)e^{\frac i\hbar z\wedge Z}U(S^{-1})U(S)\ec{\alpha}{0}\nonumber\\
&=&
e^{\frac i\hbar S(z)\wedge Z}U(S)\ec{\alpha}{0}.\label{wcs}
\end{eqnarray}
Now, denoting
\begin{eqnarray}
C&=&(\det{\Im{((\alpha)^{-1})}})^{-\frac14}(\pi\hbar)^{-\frac n4}(\det B)^{-\frac12}(\pi\hbar)^{-\frac n2},\nonumber\\
C'&=&(\det(B^{-1}A+\alpha^{-1})^{-\frac12}(\pi\hbar)^{\frac n2},\nonumber\\ C"&=&(\det{\Im{((S\cdot\alpha)^{-1})}})^{\frac14}(\pi\hbar)^{\frac n4},\nonumber
\end{eqnarray}

we have
\begin{eqnarray}
U(S)\ec{\alpha}{0}(x)
&=&
C\int e^{-\frac i{2\hbar}\left(xDB^{-1}x-2xB^{-1}y+yB^{-1}Ay\right)}\ec{\alpha}{0}(y)dy \nonumber\\
&=&
C\int e^{-\frac i{2\hbar}\left(xDB^{-1}x-2xB^{-1}y+yB^{-1}Ay\right)}
e^{-\frac i{2\hbar}y\alpha^{-1}y}dy\nonumber\\
&=&
CC'e^{-\frac i{2\hbar}x\left(DB^{-1}-(B^{-1})^T(B^{-1}A+\alpha^{-1})^{-1}B^{-1}\right)x}\nonumber\\
&=&
CC'
e^{-\frac i{2\hbar}x\left(DB^{-1}-(AB^T+B\alpha^{-1}B^T)^{-1}\right)x}\nonumber\\
&=&
CC'
e^{-\frac i{2\hbar}x\left(DB^{-1}(AB^T+B\alpha^{-1}B^T)-1)(AB^T+B\alpha^{-1}B^T)^{-1}\right)x}\nonumber\\
&=&
CC'
e^{-\frac i{2\hbar}x\left(DB^{-1}AB^T+D\alpha^{-1}B^T-1)(AB^T+B\alpha^{-1}B^T)^{-1}\right)x}\nonumber\\
&=&
CC'
e^{-\frac i{2\hbar}x\left((B^{-1})^TD^TAB^T+D\alpha^{-1}B^T-1)(AB^T+B\alpha^{-1}B^T)^{-1}\right)x}\nonumber\\
&=&
CC'
e^{-\frac i{2\hbar}x\left((B^{-1})^T(D^TA-I)B^T+D\alpha^{-1}B^T)(AB^T+B\alpha^{-1}B^T)^{-1}\right)x}\nonumber\\
&=&
CC'
e^{-\frac i{2\hbar}x\left((B^{-1})^T(B^TC)B^T+D\alpha^{-1}B^T)(AB^T+B\alpha^{-1}B^T)^{-1}\right)x}\nonumber\\
&=&
CC'
e^{-\frac i{2\hbar}x\left((C+D\alpha^{-1})B^T)(AB^T+B\alpha^{-1}B^T)^{-1}\right)x}\nonumber\\
&=&
CC'
e^{-\frac i{2\hbar}x\left(C+D\alpha^{-1}(A+B\alpha^{-1})^{-1}\right)x}\nonumber\\
&=&
CC'
e^{-\frac i{2\hbar}x\left(C\alpha+D(A\alpha+B)^{-1}\right)x}\nonumber\\
&=&
CC'
e^{-\frac i{2\hbar}x\left((A\alpha+B)(C\alpha+D)^{-1}\right)^{-1}x}\nonumber\\
&=&
CC'
e^{-\frac i{2\hbar}x\left(S\cdot\alpha\right)^{-1}x}.\nonumber\\
&=&CC'C"\ec{S\cdot\alpha}{0}\nonumber\\
&=&
(\det(A+B\alpha^{-1})^{-\frac12}\left(\tfrac{\det\Im(S\cdot\alpha)^{-1}}{\det\Im(\alpha^{-1})} \right)^{\frac14}\ec{S\cdot\alpha}{0},\nonumber
\end{eqnarray}
so that, by \eqref{wcs},
\begin{eqnarray}
U(S)\ec{\alpha}{0}&=&(\det(A+B\alpha^{-1})^{-\frac12}\left(\tfrac{\det\Im(S\cdot\alpha)^{-1}}{\det\Im(\alpha^{-1})} \right)^{\frac14}\ec{S\cdot\alpha}{0}\nonumber
\end{eqnarray}
The end of the proof is exactly the same as in dimension 1.
\end{proof}
%
The (quasi) same proof as for Theorem \ref{thm1} leads to the following one, verbatim the same.
\begin{theorem}[
Theorem \ref{thm1} in dimension $n$]\label{thm1n}\ 

Let
\[
H=\int h(z)\vert\ec{\alpha}{z}\rangle\langle\ec{\alpha}{z}\vert\frac{dzd\bar z}{2\pi\hbar}.
\]
Then, for any $S,\alpha$ such that $|V\cdot \alpha|<\infty,\ \Im(V\cdot \alpha)>0,\ V=S^{-1},\com{S}^{-1}, S^{-1}\com{S}$, 

\[
U(S)^{-1}HU(S)=\int 
h_{\com{S}\cdot \alpha,\alpha}(_\alpha T_{\com{S}}z)
|\det{_\alpha T_\com{S}}|
{_\alpha \fract_S}
e^{i\frac{
S^{-1}({_\alpha T_{\com{S}}}(z))\wedge({_\alpha T_{S^{-1}\com{S}}(z)-z
})}\hbar}
\frac{\vert\psi^{S^{-1}\com{S}\cdot\alpha}
_{{_\alpha T_{S^{-1}\com{S}}}(z)}
\rangle\langle\psi^{\alpha}_z\vert}
{\langle\psi^{S^{-1}\com{S}\cdot\alpha}
_{{_\alpha T_{S^{-1}\com{S}}}(z)}
\vert\psi^{\alpha}_z\rangle}
\frac{dzd\bar z}{2\pi\hbar},
\]
where $h_{\com{S}\cdot \alpha,\alpha}$ is defined by \eqref{alphaalphaprimen}-\eqref{betabetaprimen}.
\end{theorem}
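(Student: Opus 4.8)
The plan is to run the proof of Theorem~\ref{thm1} from Section~\ref{proof} line for line, replacing every scalar Gaussian manipulation by its $n$-dimensional counterpart supplied by Proposition~\ref{lem2n}. First I would conjugate the Töplitz integral term by term and move $U(S)$ onto the bra through the adjoint relation $U(S)^{*}=U(\com{S}^{-1})$, valid up to a global phase (read off, exactly as for the kernel \eqref{udexy}, by conjugating and transposing the kernel of Proposition~\ref{lem2n}). This gives $U(S)^{-1}\vert\ec{\alpha}{z}\rangle\langle\ec{\alpha}{z}\vert U(S)=\vert U(S^{-1})\ec{\alpha}{z}\rangle\langle U(\com{S}^{-1})\ec{\alpha}{z}\vert$, a genuinely off-diagonal dyad as soon as $S\notin Spn(2n,\R)$.

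Next I would feed each state into Proposition~\ref{lem2n}. This produces $U(S^{-1})\ec{\alpha}{z}={_\alpha\fract_{S^{-1}}}\,e^{i\frac{S^{-1}(z)\wedge{_\alpha T_{S^{-1}}}(z)}{2\hbar}}\ec{S^{-1}\cdot\alpha}{{_\alpha T_{S^{-1}}}(z)}$ and the analogous expression for $U(\com{S}^{-1})\ec{\alpha}{z}$ with $S^{-1}$ replaced by $\com{S}^{-1}$, the determinant amplitudes ${_\alpha\fract_{\cdot}}$ and the symplectic-area phases being exactly those furnished by the proposition. Collecting them, the conjugated rank-one operator equals $C(z)\,\vert\ec{S^{-1}\cdot\alpha}{{_\alpha T_{S^{-1}}}(z)}\rangle\langle\ec{\com{S}^{-1}\cdot\alpha}{{_\alpha T_{\com{S}^{-1}}}(z)}\vert$ for a scalar $C(z)$ built from the two amplitudes and the two phases. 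As in the scalar proof, I would fix $C(z)$ by idempotency: although $U(S)$ is not unitary for complex $S$, it is invertible, so $U(S)^{-1}\vert\ec{\alpha}{z}\rangle\langle\ec{\alpha}{z}\vert U(S)$ is still an idempotent, and imposing $P^{2}=P$ forces the normalization to be the reciprocal overlap of the two displayed states, sparing any hand computation of Gaussian overlaps.

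It then remains to pass from this intermediate off-diagonal form, whose ket and bra carry the ``wrong'' widths $S^{-1}\cdot\alpha$ and $\com{S}^{-1}\cdot\alpha$, to the normalized form of the statement, whose widths are $S^{-1}\com{S}\cdot\alpha$ and $\alpha$. For this I would use two ingredients: the change of variable $z\mapsto{_\alpha T_{\com{S}}}(z)$, which produces the Jacobian $\vert\det{_\alpha T_{\com{S}}}\vert$, evaluates the symbol at ${_\alpha T_{\com{S}}}(z)$, and (via the composition law ${_\alpha T_{S'S}}={_{S\cdot\alpha}T_{S'}}\circ{_\alpha T_S}$ underlying Theorem~\ref{flot}) turns the ket centre into ${_\alpha T_{S^{-1}\com{S}}}(z)$; and the off-diagonal analogue of the width-rewriting \eqref{alphaalphaprimen}--\eqref{betabetaprimen}, which carries the symbol $h$ into $h_{\com{S}\cdot\alpha,\alpha}$ and reconciles the widths. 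Reassembling the two symplectic-area phases from Proposition~\ref{lem2n} with the phase generated by the change of variable yields the single exponent $e^{i\frac{S^{-1}({_\alpha T_{\com{S}}}(z))\wedge({_\alpha T_{S^{-1}\com{S}}}(z)-z)}{\hbar}}$, and the surviving determinant amplitude is ${_\alpha\fract_S}$.

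The main obstacle is precisely this last reconciliation. In one degree of freedom the various symplectic areas are scalars that collapse and are silently swallowed by the projector normalization, which is why neither ${_\alpha\fract_S}$ nor an explicit phase appears in Theorem~\ref{thm1}; for $n>1$ the amplitude ${_\alpha\fract_S}=(\det(A+B\alpha^{-1}))^{-1/2}(\det\Im(S\cdot\alpha)^{-1}/\det\Im(\alpha^{-1}))^{1/4}$ is a genuine determinant and the antisymmetric form $S^{-1}(\cdot)\wedge(\cdot)$ does not degenerate, so both must be tracked explicitly. The delicate point is to check that the width labels compose correctly under $\alpha\mapsto S\cdot\alpha$ — using $\alpha^{T}=\alpha$ and the symplectic identity $S(z)\wedge S(z')=z\wedge z'$ valid for $S\in Spn(2n,\C)$ — and that the three phases recombine into exactly the exponent displayed; this is the only step that is not literally identical to the scalar argument.
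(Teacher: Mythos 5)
Your proposal follows the paper's own route exactly: the paper establishes Theorem \ref{thm1n} by declaring its proof ``verbatim the same'' as that of Theorem \ref{thm1}, i.e.\ term-by-term conjugation of the T\"oplitz dyads, the adjoint identity $U(S)^*=U(\com{S}^{-1})$, Proposition \ref{lem2n} for the propagated coherent states, the idempotency trick to fix the normalization, and the final change of variable $z\mapsto{_\alpha T_{\com{S}}}(z)$ --- which is precisely the plan you lay out. The one caveat you inherit from the paper itself is that the idempotency argument determines the scalar completely in \emph{any} dimension, so it cannot coexist with the additional explicit amplitude ${_\alpha \fract_S}$ and phase displayed in Theorem \ref{thm1n}; your suggestion that these factors ``collapse'' only when $n=1$ does not resolve that tension, but this is an inconsistency between the paper's two statements rather than a defect of your strategy.
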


\vskip 1cm
\begin{appendix}
\section{Weyl}\label{w}
Easy computations of Gaussian integrals show the following result.
\begin{lemma}\label{husweyl}
\bea
\sigma_{Weyl}^{|\psi^\alpha_z><\psi^{\alpha'}_{z'}|}(x,\xi)=\nn\left.
\sqrt{\frac{2\sqrt{\Re\beta\Re\beta'}}{\beta+\bbetap}}\frac1\hbar
e^{-\frac{\beta\bbetap}{2(\beta+\bbetap)\hbar}(q+q'-2x)^2}
e^{-\frac{1}{2(\beta+\bbetap)\hbar}(p+p'-2\xi)^2}
e^{i[(p-p')x-(p+p'-2\xi)(\beta(x-q)-\bbetap(x-q'))/(\beta+\bbetap)\hbar]}\right|_{\substack{\beta=i\alpha\\ \beta'=i\alpha'}}\nonumber
\eea
In particular when $z=z',\ \alpha=\alpha'>0$,
\[
\sigma_{Weyl}^{|\psi^\alpha_z><\psi^{\alpha}_{z}|}(x,\xi)=
\frac1{\pi\hbar}e^{-\frac i\alpha\frac{(q-x)^2}\hbar}
e^{i\alpha\frac{(p-\xi)^2}{\hbar}}
\]
\end{lemma}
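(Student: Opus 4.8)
The plan is to reduce the statement to a single Gaussian integration. Writing the kernel of the rank-one operator as $K(x,y)=\psi^\alpha_z(x)\,\overline{\psi^{\alpha'}_{z'}(y)}$ and inserting the explicit coherent states from Proposition \ref{lem2}, $K$ is the product of the two normalization constants $\left(\frac{\Im\alpha}{\pi\hbar|\alpha|^2}\right)^{1/4}\left(\frac{\Im\alpha'}{\pi\hbar|\alpha'|^2}\right)^{1/4}$ and an exponential whose argument is a quadratic polynomial in $(x,y)$. Its Weyl symbol is then $\sigma_{Weyl}(x,\xi)=\int K(x+\tfrac s2,x-\tfrac s2)\,e^{-is\xi/\hbar}\,ds$ (with the normalization fixed so as to be consistent with \eqref{pfw}); after passing to the midpoint and difference coordinates the integrand becomes a single Gaussian in the difference variable $s$.

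First I would perform the substitution $\beta=i\alpha$, $\beta'=i\alpha'$ announced in the statement, which replaces $-\tfrac{i}{2\hbar\alpha}$ and $+\tfrac{i}{2\hbar\bar{\alpha'}}$ by clean multiples of $1/\beta$ and $1/\bar{\beta'}$; the hypothesis $\Im\alpha,\Im\alpha'>0$ then fixes the sign of the real parts occurring in the $s^2$-coefficient and guarantees both convergence of the $s$-integral and the correct branch of the square root. Next I would collect the coefficients of $s^2$, $s$ and $s^0$ in the exponent, complete the square, and apply $\int e^{as^2+bs}\,ds=\sqrt{\pi/(-a)}\,e^{-b^2/(4a)}$. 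Here the $s^2$-coefficient is proportional to $(\beta+\bar{\beta'})/(\beta\bar{\beta'})$, so the Gaussian factor $\sqrt{\pi/(-a)}$ combined with the two normalization constants above reproduces the radical normalization prefactor $\sqrt{\tfrac{2\sqrt{\Re\beta\,\Re\beta'}}{\beta+\bar{\beta'}}}$ (times the appropriate power of $\hbar$) displayed in the statement.

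The last step is purely organizational: the term $-b^2/(4a)$, together with the quadratic-in-$x$ and linear-in-$\xi$ pieces that survived the integration, must be regrouped into the two real Gaussian factors in $(q+q'-2x)$ and $(p+p'-2\xi)$ together with the single mixed phase factor of the statement; one checks that the coefficients of the two squares are precisely $-\tfrac{\beta\bar{\beta'}}{2(\beta+\bar{\beta'})\hbar}$ and $-\tfrac{1}{2(\beta+\bar{\beta'})\hbar}$. I expect this bookkeeping — tracking the numerous cross terms in $q,q',p,p',x,\xi$ and confirming that the leftover phase collapses to the displayed exponent — to be the only genuine obstacle, and it is entirely mechanical. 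Finally, specializing to $z=z'$ and $\alpha=\alpha'$ makes $\beta=\bar{\beta'}$ real, so that $\beta+\bar{\beta'}=2\beta$, the radical prefactor collapses and the mixed phase vanishes, leaving $\tfrac1{\pi\hbar}\,e^{-\frac i\alpha\frac{(q-x)^2}\hbar}\,e^{i\alpha\frac{(p-\xi)^2}\hbar}$, which is the second displayed formula.
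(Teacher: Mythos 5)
Your proposal is correct and coincides with the paper's own (entirely implicit) argument: the paper's proof consists of the single sentence that the lemma follows from easy computations of Gaussian integrals, and your plan --- write the kernel $\psi^\alpha_z(x+\tfrac s2)\,\overline{\psi^{\alpha'}_{z'}(x-\tfrac s2)}$, integrate the resulting Gaussian in $s$ after completing the square, and regroup the exponent into the two real squares and the mixed phase --- is exactly that computation spelled out. The specialization also works as you describe, provided one reads the paper's condition ``$\alpha=\alpha'>0$'' as $\alpha/i>0$ (purely imaginary $\alpha$), which is precisely what makes $\beta=\bar{\beta'}$ real as you note.
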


\begin{corollary}\label{hushus}
\[
\int_\C h_\alpha(z)\vert\pbz\rangle\langle\pbz\vert\frac{dzd\bar z}{2\pi\hbar}
=
\int_\C h_{\alpha,\alpha'}(z)\vert\psi^{\alpha'}_z\rangle\langle\psi^{\alpha'}_z\vert\frac{dzd\bar z}{2\pi\hbar}
\]
if and only if 
\be\label{laplac}
h_{\alpha,\alpha'}=e^{-i\frac\hbar4(\alpha-\alpha')\Delta_\xi+i\frac\hbar4(\frac1\alpha-\frac1{\alpha'})\Delta_x}h_{\alpha'},\ 
\Im{(\alpha-\alpha')}>0.\ \Im{(\frac1\alpha-\frac1{\alpha'})}<0.
\ee
\end{corollary}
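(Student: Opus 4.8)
The plan is to pass to Weyl symbols and reduce the claimed identity to a computation with complex heat semigroups. Write $H_\beta[g]:=\int g(z)\,|\psi^\beta_z\rangle\langle\psi^\beta_z|\,\frac{dzd\bar z}{2\pi\hbar}$ for the T\"oplitz operator of symbol $g$ at parameter $\beta$. Since Weyl quantization is a linear bijection between symbols and operators, the identity $H_\alpha[h_\alpha]=H_{\alpha'}[h_{\alpha,\alpha'}]$ holds if and only if the two Weyl symbols coincide. By linearity of the Weyl correspondence the symbol of $H_\beta[g]$ is obtained by integrating the diagonal coherent-state symbol of Lemma \ref{husweyl} against $g$; for $\Im\beta>0$ (the diagonal formula extends from the real case by analytic continuation, the Gaussian staying integrable since $\Re(-i/\beta)=\Im(1/\beta)<0$ and $\Re(i\beta)=-\Im\beta<0$) this reads
\[
\sigma^{Weyl}_{H_\beta[g]}(x,\xi)=\int g(q,p)\,\frac1{\pi\hbar}\,e^{-\frac i\beta\frac{(q-x)^2}\hbar}\,e^{i\beta\frac{(p-\xi)^2}\hbar}\,\frac{dq\,dp}{2\pi\hbar}.
\]

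First I would recognize the right-hand side as a phase-space convolution of $g$ with a complex Gaussian, hence as the action of a complex heat semigroup. Using the Weierstrass identity $\frac1{\sqrt{4\pi t}}\int e^{-\frac{(x-q)^2}{4t}}g(q)\,dq=(e^{t\Delta}g)(x)$, valid for complex $t$ with $\Re t\ge 0$, the $q$-integration contributes $e^{-i\frac{\beta\hbar}4\Delta_x}$ and the $p$-integration contributes $e^{i\frac{\hbar}{4\beta}\Delta_\xi}$, so that
\[
\sigma^{Weyl}_{H_\beta[g]}=e^{-i\frac{\beta\hbar}4\Delta_x+i\frac{\hbar}{4\beta}\Delta_\xi}\,g=:G_\beta\,g.
\]

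Then I would apply this with $\beta=\alpha$, $g=h_\alpha$ for the left operator and $\beta=\alpha'$, $g=h_{\alpha,\alpha'}$ for the right one, and equate: $G_{\alpha'}h_{\alpha,\alpha'}=G_\alpha h_\alpha$. Because $G_\beta$ is a function of the commuting constant-coefficient operators $\Delta_x,\Delta_\xi$, exponents add, and inverting $G_{\alpha'}$ gives
\[
h_{\alpha,\alpha'}=G_{\alpha'}^{-1}G_\alpha\,h_\alpha=e^{-i\frac\hbar4(\alpha-\alpha')\Delta_x+i\frac\hbar4(\frac1\alpha-\frac1{\alpha'})\Delta_\xi}\,h_\alpha,
\]
which is \eqref{laplac} (up to the labelling of $x,\xi$, which is ambiguous here since the paper orders $z$ as $(q,p)$ in some places and $(p,q)$ in others). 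This is an equivalence because each step is reversible once the semigroups are well defined. The real part of the $\Delta_x$-coefficient is $\frac\hbar4\Im(\alpha-\alpha')$ and that of the $\Delta_\xi$-coefficient is $-\frac\hbar4\Im(\frac1\alpha-\frac1{\alpha'})$, so the composed operator is a genuine forward (smoothing) Gaussian average precisely when $\Im(\alpha-\alpha')>0$ and $\Im(\frac1\alpha-\frac1{\alpha'})<0$: these are exactly the hypotheses of \eqref{laplac}.

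The main obstacle is analytic rather than algebraic: $G_{\alpha'}^{-1}$ is by itself a backward heat flow, which is ill-posed, so the content of the statement is that the product $G_{\alpha'}^{-1}G_\alpha$ recombines into a well-posed forward flow exactly under the stated sign conditions, and that all convolutions converge on the relevant class of symbols. I would therefore pin down the functional framework — the Weyl symbols here being entire functions of $(x,\xi)$ of Gaussian-controlled growth — on which the Weierstrass identity, the additivity of the exponents, and the bijectivity of Weyl quantization hold simultaneously; once that class is fixed, the underlying Gaussian integrals are elementary.
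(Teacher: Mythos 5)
Your proof is correct and follows essentially the same route as the paper: the paper's own two-line argument also passes to Weyl symbols via Lemma \ref{husweyl}, identifies the resulting Gaussian convolution with the complex heat semigroup $e^{-i\alpha\hbar\Delta_x/4+i\hbar\Delta_\xi/(4\alpha)}$, and formally inverts one factor, the sign conditions in \eqref{laplac} being exactly those making the combined flow a forward (smoothing) one as you check. The $\Delta_x$/$\Delta_\xi$ labelling discrepancy you flag is genuine but lies in the printed statement rather than in your argument: the paper's own displayed identity inside the proof pairs $\alpha$ with $\Delta_x$ and $1/\alpha$ with $\Delta_\xi$, consistent with your computation and opposite to \eqref{laplac} as written.
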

\begin{proof}
From Lemma \ref{husweyl} we get that
\[
e^{-i\alpha\hbar\frac{\Delta_x}4+\frac i\alpha\hbar\frac{\Delta_\xi}4}h(x,\xi)
=
e^{-i\alpha'\hbar\frac{\Delta_x}4+\frac i{\alpha'}\hbar\frac{\Delta_\xi}4}
h_{\alpha'}(x,\xi)
\]
and the corollary follows.
\end{proof}
%

\end{appendix}



\end{document}